\newcommand\cut[1]{}
\newcommand{\squishlist}{
   \begin{list}{$\bullet$}
    { \setlength{\itemsep}{0pt}      \setlength{\parsep}{3pt}
      \setlength{\topsep}{3pt}       \setlength{\partopsep}{0pt}
      \setlength{\leftmargin}{1.5em} \setlength{\labelwidth}{1em}
      \setlength{\labelsep}{0.5em} } }
\newcommand{\squishlisttwo}{
   \begin{list}{$\bullet$}
    { \setlength{\itemsep}{0pt}    \setlength{\parsep}{0pt}
      \setlength{\topsep}{0pt}     \setlength{\partopsep}{0pt}
      \setlength{\leftmargin}{2em} \setlength{\labelwidth}{1.5em}
      \setlength{\labelsep}{0.5em} } }
\newcommand{\squishend}{
    \end{list}  }
\newtheorem{thm}{Theorem}{}
{}
{}
{}
\newcommand{\be}{\begin{equation}}
\newcommand{\ee}{\end{equation}}
\newcommand{\bea}{\begin{eqnarray}}
\newcommand{\eea}{\end{eqnarray}}
\newcommand{\beaa}{\begin{eqnarray*}}
\newcommand{\eeaa}{\end{eqnarray*}}
\DeclareMathOperator*{\argmin}{arg\,min}
\DeclareMathOperator*{\argmax}{arg\,max}
\newtheorem{asm}{Assumption}[section]
\newtheorem{cor}[thm]{Corollary}
\newtheorem{rmk}{Remark}[section]
\begin{document}

\title{Convergence Rates of Variational Inference in \\
Sparse Deep Learning}

\author{\name Badr-Eddine Ch\'erief-Abdellatif \email badr.eddine.cherief.abdellatif@ensae.fr \\
       \addr CREST, ENSAE, Institut Polytechnique de Paris}

\editor{}

\maketitle

\begin{abstract}
Variational inference is becoming more and more popular for approximating intractable posterior distributions in Bayesian statistics and machine learning. Meanwhile, a few recent works have provided theoretical justification and new insights on deep neural networks for estimating smooth functions in usual settings such as nonparametric regression. In this paper, we show that variational inference for sparse deep learning retains the same generalization properties than exact Bayesian inference. In particular, we highlight the connection between estimation and approximation theories via the classical bias-variance trade-off and show that it leads to near-minimax rates of convergence for H\"older smooth functions. Additionally, we show that the model selection framework over the neural network architecture via ELBO maximization does not overfit and adaptively achieves the optimal rate of convergence.
\end{abstract}

\begin{keywords}
  Variational Inference, Neural Networks, Deep Learning, Generalization
\end{keywords}

\section{Introduction}

Deep learning (DL) is a field of machine learning that aims to model data using complex architectures combining several nonlinear transformations with hundreds of parameters called Deep Neural Networks (DNN) \citep{deeplearningbooklecun,Goodfellow-et-al-2016}. Although generalization theory that explains why DL generalizes so well is still an open problem, it is widely acknowledged that it mainly takes advantage of large datasets containing millions of samples and a huge computing power coming from clusters of graphics processing units. Very popular architectures for deep neural networks such as the multilayer perceptron, the convolutional neural network \citep{CNNreference98lecun}, the recurrent neural network \citep{RumelhartRNN1986} or the generative adversarial network \citep{GAN-Goodfellow} have shown impressive results and have enabled to perform better than humans in various important areas in artificial intelligence such as image recognition, game playing, machine translation, computer vision or natural language processing, to name a few prominent examples. An outstanding example is AlphaGo \citep{alphago}, an artificial intelligence developed by Google that learned to play the game of Go using deep learning techniques and even defeated the world champion in 2016.

The Bayesian approach, leading to popular methods such as Hidden Markov Models \citep{HiddenMarkovModel1966} and Particle Filtering \citep{DoucetParticleFiltering}, provides a natural way to model uncertainty. Some prior distribution is put over the space of parameters and represents the prior belief as to which parameters are likely to have generated the data before any datapoint is observed. Then this prior distribution is updated using the Bayes rule when new data arrive in order to capture the more likely parameters given the observations. Unfortunately, exact Bayesian inference is computationally challenging for complex models as the normalizing constant of the posterior distribution is often intractable. In such cases, approximate inference methods such as variational inference (VI) \citep{VIJordan1999} and expectation propagation \citep{MinkaEP} are popular to overcome intractability in Bayesian modeling. The idea of VI is to minimize the Kullback-Leibler (KL) divergence with respect to the posterior given a set of tractable distributions, which is also equivalent to maximizing a numerical criterion called the Evidence Lower Bound (ELBO). Recent advances of VI have shown great performance in practice and have been applied to many machine learning problems \citep{hoffman2013stochastic, kingma2013auto}.

The Bayesian approach to learning in neural networks has a long history. Bayesian Neural Networks (BNN) have been first proposed in the 90s and widely studied since then \citep{McKay1992BNN,NealPhD}. They offer a probabilistic interpretation and a measure of uncertainty for DL models. They are more robust to overfitting than classical neural networks and still achieve great performance even on small datasets. A prior distribution is put on the parameters of the network, namely the weight matrices and the bias vectors, for instance a Gaussian or a uniform distribution, and Bayesian inference is done through the likelihood specification. Nevertheless, state-of-the-art neural networks may contain millions of parameters and the form of a neural network is not adapted to exact integration, which makes the posterior distribution be intractable in practice. Modern approximate inference mainly relies on VI, with sometimes a flavor of sampling techniques. A lot of recent papers have investigated variational inference for DNNs \citep{HintonVanCamp1993,GravesVI2011,Blundell2015} to fit an approximate posterior that maximizes the evidence lower bound. For instance, \cite{Blundell2015} introduced Bayes by Backprop, one of the most famous techniques of VI applied to neural networks, which derives a fully factorized Gaussian approximation to the posterior: using the reparameterization trick \citep{OpperArchambeau2009}, the gradients of ELBO towards parameters of the Gaussian approximation can be computed by backpropagation, and then be used for updates. Another point of interest in DNNs is the choice of the prior. \cite{Blundell2015} introduced a mixture of Gaussians prior on the weights, with one mixture tightly concentrated around zero, imitating the sparsity-inducing spike-and-slab prior. This offers a Bayesian alternative to the dropout regularization procedure \citep{DropoutSrivastava2014} which injects sparsity in the network by switching off randomly some of the weights of the network. This idea goes back to David MacKay who discussed in his thesis the possibility of choosing a spike-and-slab prior over the weights of the neural network \citep{McKayPhD}. More recently, \cite{Rockova2018} introduced Spike-and-Slab Deep Learning (SS-DL), a fully Bayesian alternative to dropout for improving generalizability of deep ReLU networks.

\subsection{Related work}

Although deep learning is extremely popular, the study of generalization properties of DNNs is still an open problem. Some works have been conducted in order to investigate the theoretical properties of neural networks from different points of view. The literature developed in the past decades can be shared in three parts. First, the approximation theory wonders how well a function can be approximated by neural networks. The first studies were mostly conducted to obtain approximation guarantees for shallow neural nets with a single hidden layer \citep{Cybenko89ShallowNN,Barron93NN}. Since then, modern research has focused on the expressive power of depth and extended the previous results to deep neural networks with a larger number of layers \citep{BengioDelalleau2011,Yarotsky2017,Petersen2018Approximation,DNNApproximationTheory2019}. Indeed, even though the universal approximation theorem \citep{Cybenko89ShallowNN} states that a shallow neural network containing a finite number of neurons can approximate any continuous function on compact sets under mild assumptions on the activation function, recent advances showed that a shallow network requires exponentially many neurons in terms of the dimension to represent a monomial function, whereas linearly many neurons are sufficient for a deep network \citep{RolnickTegmark2018PowerOfDepth}. Second, as the objective function in deep learning is known to be nonconvex, the optimization community has discussed the landscape of the objective as well as the dynamics of some learning algorithms such as Stochastic Gradient Descent (SGD) \citep{BaldiOptiNNPCA1989,Amari2000PlateausDNN,SoudryLocalMinima2016,KawaguchiNoPoorLocalMinima2016,Kawaguchi2018EffectOfDepth,Nguyen2019LandscapeDNN,ConvergenceDeepLearningAllenZhu,GlobalMinimaDNNDu}. Finally, the statistical learning community has investigated generalization properties of DNNs, see \cite{Barron94estimation,ZhangUnderstandingDL2017,SchmidtHieberDNN,Suzuki18DNNkerels,Imaizumi19DNN,suzuki2019adaptivity}. In particular, \cite{SchmidtHieberDNN} and \cite{suzuki2019adaptivity} showed that estimators in nonparametric regression based on sparsely connected DNNs with ReLU activation function and wisely chosen architecture achieve the minimax estimation rates (up to logarithmic factors) under classical smoothness assumptions on the regression function. In the same time, \cite{Bartlett2017MarginBoundsNNs} and \cite{Srebro2018PACMarginBoundsNNs} respectively used Rademacher complexity and covering number, and PAC-Bayes theory to get spectrally-normalized margin bounds for deep ReLU networks. More recently, \cite{Imaizumi19DNN} and \cite{Suzuki2019Superiority} showed the superiority of DNNs over linear operators in some situations when DNNs achieve the minimax rate of convergence while alternative methods fail. From a Bayesian point of view, \cite{Rockova2018} and \cite{Suzuki18DNNkerels} studied the concentration of the posterior distribution while \cite{Vladimirova2019PriorsBNNsUnits} investigated the regularization effect of prior distributions at the level of the units.

Such as for generalization properties of DNNs, only little attention has been put in the literature towards the theoretical properties of VI until recently. \cite{alquier2016properties} studied generalization properties of variational approximations of Gibbs distributions in machine learning for bounded loss functions. \cite{Tempered,Chicago,RoniKhardonExcessRiskBounds2017,Plage,cherief2018consistency,cherief2018consistency2,Jaiswal2019RiskSensitiveVB} extended the previous guarantees to more general statistical models and studied the concentration of variational approximations of the posterior distribution, while \cite{wang2018frequentist} provided Bernstein-von-Mises' theorems for variational approximations in parametric models. \cite{HugginsBroderick2018PracticalBounds,UBVICampbell2019,jaiswal2019asymptotic} discussed theoretical properties of variational inference algorithms based on various divergences (respectively Wasserstein and Hellinger distances, and R\'enyi divergence). More recently, \cite{cheriefAlquierKhan2019} presented generalization bounds for online variational inference. All these works show that under mild conditions, the variational approximation is consistent and achieves the same rate of convergence than the Bayesian posterior distribution it approximates. Note that \cite{Tempered,Plage,cherief2018consistency,cherief2018consistency2} restricted their studies to tempered versions of the posterior distribution where the likelihood is raised to an $\alpha$-power ($\alpha<1$) as it is known to require less stringent assumptions to obtain consistency and to be robust to misspecification, see respectively \cite{bhattacharya2016bayesian} and \cite{grunwaldmisspecifiation}. Nevertheless, some questions remain unanswered, as the theoretical study of generalization of variational inference for deep neural networks.

\subsection{Contributions}

This paper aims at filling the gap between theory and practice when using variational approximations for tempered Bayesian Deep Neural Networks. To the best of our knowledge, this is the first paper to present theoretical generalization error bounds of variational inference for Bayesian deep learning. Inspired by the related literature, our work is motivated by the following questions:
\begin{itemize}
    \item Do consistency of Bayesian DNNs still hold when an approximation is used instead of the exact posterior distribution, and can we obtain the same rates of convergence than those obtained for the regular posterior distribution and frequentist estimators ?
    \item Is it possible to obtain a nonasymptotic generalization error bound that holds for (almost) any generating distribution function and that gives a general formula ?
    \item What about the consistency of numerical algorithms used to compute these variational approximations ?
    \item Can we obtain new insights on the structure of the networks ?
\end{itemize}

The main contribution of this paper, a nonasymptotic generalization error bound for variational inference in sparse DL in the nonparametric regression framework, answers the first two questions. This generalization result is similar to theoretical inequalities in the seminal works of \cite{Suzuki18DNNkerels,Imaizumi19DNN,Rockova2018} on generalization properties of deep neural networks, and is inspired by the general literature on the consistency of variational approximations \citep{Tempered,Plage}. In particular, it states that under the same conditions, sparse variational approximations of posterior distributions of deep neural networks are consistent at the same rate of convergence than the exact posterior.

It also raises the question of finding a relevant general definition of consistency that can be used to provide theoretical properties for the exact Bayesian DNNs distribution and their variational approximations. Indeed, a classical criterion used to assess frequentist guarantees for Bayesian estimators is the concentration of the posterior (to the true distribution) which is defined as the asymptotic concentration of the Bayesian estimator to the true distribution \citep{ghosal2000convergence}. Nevertheless, posterior concentration to the true distribution only applies when the model is well specified, or at least when the model contains distributions in the neighborhood of the true distribution, which is problematic for misspecified models e.g. when the neural network does not sufficiently approximate the generating distribution. And although the posterior distribution may concentrate to the best approximation of the true distribution in KL divergence in such misspecified models, there exists pathological cases where the regular Bayesian posterior is not consistent at all, see \cite{grunwaldmisspecifiation}. This is the reason why we focus here on tempered posteriors which are robust to such misspecification. Therefore, we introduce in Section \ref{section-Framework} a notion of consistency of a Bayesian estimator which is closely related to the notion of concentration - even stronger - and which enables a more robust formulation of generalization error bounds for variational approximations. See Appendix \ref{app:connection} for more details on the connection between the notions of consistency and concentration. 

Then we focus on optimization aspects. We no longer assume an ideal optimization, as done for instance in \cite{SchmidtHieberDNN,Imaizumi19DNN}. We address in this paper the question of the consistency of numerical algorithms used to compute our ideal approximations. We consider an optimization error given by any algorithm and independent to the statistical error, and we show how it affects our generalization result. Our upper bound highlights the connection between the consistency of the variational approximation and the convergence of the ELBO.

We also provide insights on the structure of the network which leads to optimal rates of convergence, i.e. its depth, its width and its sparsity. Indeed, in our first generalization error bound, the structure of the network is ideally tuned for some choice of the generating function, and we show how to choose such a structure. Nevertheless, the characteristics of the regression function may be unknown, e.g. we may know that the regression function is H\"older continuous but we ignore its level of smoothness. We propose here an automated method for choosing the architecture of the network. We introduce a classical model selection framework based on the ELBO criterion \citep{cherief2018consistency2}, and we show that the variational approximation associated with the selected structure does not overfit and adaptively achieves the optimal rate of convergence even without any oracle information.

The rest of this paper is organized as follows. Section \ref{section-Framework} introduces the notations and the framework that will be considered in the paper, and presents sparse spike-and-slab variational inference for deep neural networks. Section \ref{section-DeepVI} provides theoretical generalization error bounds for variational approximations of DNNs and shows the optimality of the method for estimating H\"older smooth functions. Finally, insights on the choice of the architecture of the network are given in Section \ref{section-Model-selection} via the ELBO maximization framework. All the proofs are deferred to the appendix.

\section{Sparse deep variational inference}
\label{section-Framework}
Let us introduce the notations and the statistical framework we adopt in this paper. For any vector $x=(x_1,...,x_d) \in [-1,1]^d$ and any real-valued function $f$ defined on $[-1,1]^d$, $d>0$, we denote:
$$
\|x\|_\infty = \max_{1\leq i \leq d} |x_i|
\hspace{0.5cm}
\textnormal{,}
\hspace{0.5cm}
\|f\|_2 = \bigg( \int f^2 \bigg)^{1/2}
\hspace{0.5cm}
\textnormal{and}
\hspace{0.5cm}
\|f\|_\infty = \sup_{y\in [-1,1]^d} |f(y)|.
$$
For any $\mathbf{k}\in \{0,1,2,...\}^d$, we define $|\mathbf{k}|=\sum_{i=1}^d k_i$ and the mixed partial derivatives when all partial derivatives up to order $|\mathbf{k}|$ exist:
$$
D^{\mathbf{k}} f(x) = \frac{\partial^{|\mathbf{k}|}f}{\partial^{k_1}x_1...\partial^{k_d}x_d} (x) .
$$

We also introduce the notion of $\beta$-H\"older continuity for $\beta>0$. We denote $\lfloor \beta \rfloor$ the largest integer strictly smaller than $\beta$. Then $f$ is said to be $\beta$-H\"older continuous \citep{tsybakov2008} if all partial derivatives up to order $\lfloor \beta \rfloor$ exist and are bounded, and if:
$$
\|f\|_{\mathcal{C}_\beta} := \max_{|\mathbf{k}|\leq \lfloor \beta \rfloor} \|D^{\mathbf{k}} f\|_\infty + \max_{|\mathbf{k}| = \lfloor \beta \rfloor} \sup_{x,y \in [-1,1]^d, x\ne y} \frac{|D^\mathbf{k} f(x)-D^\mathbf{k} f(y)|}{\|x-y\|_\infty^{\beta-\lfloor \beta \rfloor}} < +\infty.
$$
$\|f\|_{\mathcal{C}_\beta}$ is the norm of the H\"older space $\mathcal{C}_\beta = \{f/\|f\|_{\mathcal{C}_\beta}<+\infty\}$.

\subsection{Nonparametric regression}

We consider the nonparametric regression framework. We have a collection of random variables $(X_i,Y_i) \in [-1,1]^d\times \mathbb{R}$ for $i=1,...,n$ which are independent and identically distributed (i.i.d.) with the generating process:
$$
\begin{cases}
X_i \sim \mathcal{U}([-1,1]^d), \\
Y_i = f_0(X_i)+\zeta_i
\end{cases}
$$
where $\mathcal{U}([-1,1]^d)$ is the uniform distribution on the interval $[-1,1]^d$, $\zeta_1,...,\zeta_n$ are i.i.d. Gaussian random variables with mean $0$ and known variance $\sigma^2$, and $f_0:[-1,1]^d\rightarrow \mathbb{R}$ is the true unknown function. For instance, the true regression function $f_0$ may belong to the set $\mathcal{C}_\beta$ of H\"older functions with level of smoothness $\beta$.

\subsection{Deep neural networks}

We call deep neural network any map $f_\theta:\mathbb{R}^d\rightarrow \mathbb{R}$ defined recursively as follows:
$$
\begin{cases}
x^{(0)}:=x, \\
x^{(\ell)}:= \rho(A_\ell x^{(\ell-1)} + b_\ell) \hspace{0.3cm} \text{for} \hspace{0.1cm} \ell=1,...,L-1, \\
f_\theta(x):=A_L x^{(L-1)} + b_L
\end{cases}
$$
where $L\geq 3$. $\rho$ is an activation function acting componentwise. For instance, we can choose the ReLU activation function $\rho(u)=\max(u,0)$. Each $A_\ell \in \mathbb{R}^{D_\ell\times D_{\ell-1}}$ is a weight matrix such that its $(i,j)$ coefficient, called edge weight, connects the $j$-th neuron of the $(\ell-1)$-th layer to the $i$-th neuron of the $\ell$-th layer, and each $b_\ell \in \mathbb{R}^{D_\ell}$ is a shift vector such that its $i$-th coefficient, called node vector, represents the weight associated with the $i$-th node of layer $\ell$. We set $D_0=d$ the number of units in the input layer, $D_{L}=1$ the number of units in the output layer and $D_\ell=D$ the number of units in the hidden layers. The architecture of the network is characterized by its number of edges $S$, i.e. the total number of nonzero entries in matrices $A_\ell$ and vectors $b_\ell$, its number of layers $L\geq 3$ (excluding the input layer), and its width $D \geq 1$. We have $S\leq T$ where $T=\sum_{\ell=1}^L D_\ell(D_{\ell-1}+1)$ is the total number of coefficients in a fully connected network. By now, we consider that $S$, $L$ and $D$ are fixed, and $d=\mathcal{O}(1)$ as $n\rightarrow +\infty$. In particular, we assume that $d \leq D$, which implies that $T \leq LD(D+1)$. We also suppose that the absolute values of all coefficients are upper bounded by some positive constant $B\geq 2$. This boundedness assumption will be relaxed in the appendix, see Appendix \ref{app:Gauss}. Then, the parameter of a DNN is $\theta = \{ (A_1,b_1),...,(A_L,b_L) \}$, and we denote $\Theta_{S,L,D}$ the set of all possible parameters. We will also alternatively consider the stacked coefficients parameter $\theta=(\theta_1,...,\theta_T)$.

\subsection{Bayesian modeling}

We adopt a Bayesian approach, and we place a spike-and-slab prior $\pi$ \citep{Castillo2015SS} over the parameter space $\Theta_{S,L,D}$ (equipped with some suited sigma-algebra) that is defined hierarchically. The spike-and-slab prior is known to be a relevant alternative to dropout for Bayesian deep learning, see \cite{Rockova2018}. First, we sample a vector of binary indicators $\gamma=(\gamma_1,...,\gamma_T)\in \{0,1\}^T$ uniformly among the set $\mathcal{S}^S_T$ of $T$-dimensional binary vectors with exactly $S$ nonzero entries, and then given $\gamma_t$ for each $t=1,...,T$, we put a spike-and-slab prior on $\theta_t$ that returns $0$ if $\gamma_t=0$ and a random sample from a uniform distribution on $[-B,B]$ otherwise:
$$
\begin{cases}
\gamma \sim \mathcal{U}(\mathcal{S}^S_T), \\
\theta_t|\gamma_t \sim \gamma_t \hspace{0.1cm} \mathcal{U}([-B,B]) + (1-\gamma_t)\delta_{\{0\}}, \hspace{0.2cm} t=1,...,T
\end{cases}
$$
where $\delta_{\{0\}}$ is a point mass at $0$ and $\mathcal{U}([-B,B])$ is a uniform distribution on $[-B,B]$. We recall that the sparsity level $S$ is fixed here and that this assumption will be relaxed in Section \ref{section-Model-selection}.

\begin{rmk}
We consider uniform distributions for simplicity as in similar works \citep{Rockova2018,Suzuki18DNNkerels}, but Gaussian distributions can be used as well when working on an unbounded parameter set $\Theta_{S,L,D}$, see Theorem \ref{thm-Gauss} in Appendix \ref{app:Gauss}.
\end{rmk} 

Then we define the tempered posterior distribution $\pi_{n,\alpha}$ on parameter $\theta \in \Theta_{S,L,D}$ using prior $\pi$ for any $\alpha \in (0,1)$:
\[
\pi_{n,\alpha}(d\theta) \propto \exp\bigg(-\frac{\alpha}{2\sigma^2}\sum_{i=1}^n (Y_i-f_\theta(X_i))^2\bigg) \pi(d\theta),
\]
which is a slight variant of the definition of the regular Bayesian posterior (for which $\alpha=1$). This distribution is known to be easier to sample from, to require less stringent assumptions to obtain concentration, and to be robust to misspecification, see respectively \cite{behrens2012tuning}, \cite{bhattacharya2016bayesian} and \cite{grunwaldmisspecifiation}.

\subsection{Sparse variational inference}

The variational Bayes approximation $\tilde{\pi}_{n,\alpha}$ of the tempered posterior is defined as the projection (with respect to the Kullback-Leibler divergence) of the tempered posterior onto some set $\mathcal{F}_{S,L,D}$:
$$
\tilde{\pi}_{n,\alpha} = \argmin_{q \in \mathcal{F}_{S,L,D}}  \textnormal{KL}(q\|\pi_{n,\alpha}).
$$
which is equivalent to:
\begin{equation}
\label{VBdefinition}
\tilde{\pi}_{n,\alpha} = \argmin_{q \in \mathcal{F}_{S,L,D}} \bigg\{ \frac{\alpha}{2\sigma^2} \sum_{i=1}^n \int (Y_i-f_\theta(X_i))^2 q(d\theta) + \textrm{KL}(q\|\pi) \bigg\}
\end{equation}
where the function inside the argmin operator in \eqref{VBdefinition} is the opposite of the evidence lower bound $\mathcal{L}_n(q)$.

We choose a sparse spike-and-slab variational set $\mathcal{F}_{S,L,D}$ - see for instance \cite{VariationalSparseCoding2019} - which can be seen as an extension of the popular mean-field variational set with a dependence assumption specifying the number of active neurons. The mean-field approximation is based on a decomposition of the space of parameters $\Theta_{S,L,D}$ as a product $\theta=(\theta_1,...,\theta_T)$ and consists in compatible product distributions on each parameter $\theta_t$, $t=1,...,T$. Here, we fit a distribution in the family that matches the prior: we first choose a distribution $\pi_\gamma$ on the set $\mathcal{S}^S_T$ that selects a $T$-dimensional binary vector $\gamma$ with $S$ nonzero entries, and then we place a spike-and-slab variational approximation on each $\theta_t$ given $\gamma_t$:
$$
\begin{cases}
\gamma \sim \pi_{\gamma}, \\
\theta_t|\gamma_t \sim \gamma_t \hspace{0.1cm} \mathcal{U}([l_t,u_t]) + (1-\gamma_t)\delta_{\{0\}} \hspace{0.3cm} \textnormal{for each} \hspace{0.2cm} t=1,...,T
\end{cases}
$$
where $-1 \leq l_t \leq u_t \leq 1$, with the distribution $\pi_{\gamma}$ and the intervals $[l_t,u_t]$, $t=1,...,T$ as the hyperparameters of the variational set $\mathcal{F}_{S,L,D}$. In particular, if we choose a deterministic $\pi_{\gamma}=\delta_{\{\gamma'\}}$ with $\gamma' \in \mathcal{S}^S_T$, then we will obtain a parametric mean-field approximation. See Section 6.6 of the PhD thesis of \cite{YarinGalThesis} for a more detailed discussion on the connection between Gaussian mean-field and sparse spike-and-slab posterior approximations.

The generalization error of the tempered posterior ${\pi}_{n,\alpha}$ and of its variational approximation $\tilde{\pi}_{n,\alpha}$ is the expected average of the squared $L_2$-distance to the true generating function over the Bayesian estimator:
$$
\mathbb{E} \bigg[ \int \|f_\theta-f_0\|_2^2 {\pi}_{n,\alpha}(d\theta) \bigg] 
\hspace{0.5cm}
\textnormal{and}
\hspace{0.5cm}
\mathbb{E} \bigg[ \int \|f_\theta-f_0\|_2^2 \tilde{\pi}_{n,\alpha}(d\theta) \bigg].
$$
We say that a Bayesian estimator is consistent at rate $r_n\rightarrow 0$ if its generalization error is upper bounded by $r_n$. Notice that consistency of the Bayesian estimator implies concentration to $f_0$. Again, see Appendix \ref{app:connection} for the connection between these two notions.


\section{Generalization of variational inference for neural networks}
\label{section-DeepVI}
The first result of this section is an extension of the result of \cite{Rockova2018} on the Bayesian distribution for H\"older regression functions. Indeed, we provide a concentration result on the posterior distribution for the expected $L_2$-distance instead of the empirical $L_2$-distance, which enables generalization instead of reconstruction on the training datapoints. This result is then extended again to the variational approximation for our definition of consistency: we show that we can still achieve near-optimality using an approximation of the posterior without any additional assumption. Finally, we explain how we can incorporate optimization error in our generalization results.

\subsection{Concentration of the posterior}

\cite{Rockova2018} gives the first posterior concentration result for deep ReLU networks when estimating H\"older smooth functions in nonparametric regression with empirical $L_2$-distance. The authors highlight the flexibility of DNNs over other methods for estimating $\beta$-H\"older smooth functions as there is a large range of values of the level of smoothness $\beta$ for which one can obtain concentration, e.g. $0<\beta<d$ for a DNN against $0<\beta<1$ for a Bayesian tree.

The following theorem provides the concentration of the tempered posterior distribution $\pi_{n,\alpha}$ for deep ReLU neural networks when using the expected $L_2$-distance for some suitable architecture of the network:

\begin{thm}
 \label{thm-concentration}
Let us assume that $\alpha \in (0,1)$, that $f_0$ is $\beta$-H\"older smooth with $0<\beta<d$ and that the activation function is ReLU. We consider the architecture of \cite{Rockova2018} for some positive constant $C_D$ independent of $n$:
$$
L= 8 + (\lfloor\log_2n\rfloor + 5)(1 + \lceil\log_2 d\rceil),
$$
$$
D = C_D \lfloor n^{\frac{d}{2\beta+d}}/\log n \rfloor,
$$
$$
S \leq 94 d^2 (\beta+1)^{2d} D (L + \lceil\log_2 d\rceil).
$$
Then the tempered posterior distribution $\pi_{n,\alpha}$ concentrates at the
minimax rate $r_n = n^{\frac{-2\beta}{2\beta+d}}$ up to a (squared) logarithmic factor for the expected $L_2$-distance in the sense that:
\begin{equation*}
\label{thm-concentration-holder}
{\pi}_{n,\alpha}\bigg(\theta\in\Theta_{S,L,D} \hspace{0.1cm} \big/ \hspace{0.1cm} \|f_\theta-f_0\|_2^2>M_n \cdot n^{\frac{-2\beta}{2\beta+d}} \cdot \log^2 n \bigg) \xrightarrow[n\rightarrow +\infty]{} 0
\end{equation*}
in probability as $n\rightarrow+\infty$ for any $M_n\rightarrow +\infty$.
\end{thm}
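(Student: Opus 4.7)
The strategy is to adapt the proof of \cite{Rockova2018} by replacing the empirical $L_2$-distance argument with a population-level argument based on the standard tempered-posterior Rényi divergence machinery (of the kind developed in \cite{Tempered,Plage,bhattacharya2016bayesian}). Writing $\varepsilon_n^2 = n^{-2\beta/(2\beta+d)}\log^2 n$, the goal reduces to establishing
$$\mathbb{E}\left[\int \|f_\theta - f_0\|_2^2 \, \pi_{n,\alpha}(d\theta)\right] \leq C\, \varepsilon_n^2,$$
after which the concentration statement follows by a simple application of Markov's inequality, since for any $M_n \to +\infty$,
$$\mathbb{P}\bigl(\pi_{n,\alpha}(\|f_\theta - f_0\|_2^2 > M_n\varepsilon_n^2) > M_n^{-1/2}\bigr) \leq C\, M_n^{-1/2} \to 0.$$

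\textbf{Approximation and prior mass.} First I invoke the deep ReLU approximation theorem for $\beta$-Hölder functions (as used in \cite{Rockova2018}, originally due to \cite{SchmidtHieberDNN}): with the stated choices of $L$, $D$, and $S$, there exists $\theta^* \in \Theta_{S,L,D}$ with $\|\theta^*\|_\infty \leq B$ such that $\|f_{\theta^*} - f_0\|_\infty \leq C_1\, n^{-\beta/(2\beta+d)}\log n$. Next I bound below the prior mass of a small $\ell_\infty$-ball $\{\theta : \|\theta - \theta^*\|_\infty \leq \eta_n\}$ restricted to the sparsity pattern $\gamma^*$ of $\theta^*$: by the definition of the spike-and-slab prior this equals $|\mathcal{S}^S_T|^{-1}(\eta_n/B)^S$, and its logarithm is $-S\log(B/\eta_n) - \log\binom{T}{S}$. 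A standard Lipschitz estimate for bounded ReLU networks, $\|f_\theta - f_{\theta'}\|_\infty \leq C_L (BD)^L \|\theta - \theta'\|_\infty$, lets me choose $\eta_n$ so that $\|f_\theta - f_0\|_\infty \leq 2C_1\varepsilon_n$ on this ball. Plugging in $S \asymp n^{d/(2\beta+d)}$, $T \leq LD(D+1)$ and the specified values of $L, D$, both $\log(B/\eta_n)$ and $\log(eT/S)$ are $\mathcal{O}(\log^2 n)$, so $S\log(B/\eta_n) + \log\binom{T}{S} = \mathcal{O}(n\varepsilon_n^2)$. This gives the prior mass condition $\pi(\{\|f_\theta - f_0\|_\infty \leq 2C_1\varepsilon_n\}) \geq e^{-c\, n\varepsilon_n^2}$, which also implies the KL-based prior mass condition (since in Gaussian regression $\mathrm{KL}$ and the variance of the log-likelihood ratio are both controlled by $\|f_\theta - f_0\|_\infty^2$).

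\textbf{From prior mass to $L_2$.} I then apply a standard tempered-posterior inequality (Theorem 2.6 of \cite{Plage} or equivalently the argument in \cite{bhattacharya2016bayesian}) that, for $\alpha \in (0,1)$, yields
$$\mathbb{E}\left[\int D^{(\alpha)}_n(P_\theta, P_0)\, \pi_{n,\alpha}(d\theta)\right] \leq \frac{\alpha}{1-\alpha}\, n\varepsilon_n^2 + \text{const},$$
where $D^{(\alpha)}_n$ denotes the $n$-sample Rényi divergence of order $\alpha$. In the present Gaussian regression model the per-sample Rényi divergence admits the explicit expression
$$D^{(\alpha)}(P_\theta, P_0) = \frac{1}{1-\alpha}\log \int \exp\!\left(-\frac{\alpha(1-\alpha)}{2\sigma^2}(f_\theta(x)-f_0(x))^2\right) dP_X(x),$$
and since both $f_\theta$ and $f_0$ are uniformly bounded (by $B$ through the architecture, and by assumption on $f_0$), an elementary convexity bound gives $D^{(\alpha)}(P_\theta, P_0) \geq c_{\alpha,\sigma,B}\, \|f_\theta - f_0\|_2^2$. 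Dividing by $n$ and combining with the tempered-posterior inequality yields the expectation bound on $\int \|f_\theta - f_0\|_2^2 \, \pi_{n,\alpha}(d\theta)$, and Markov finishes the proof.

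\textbf{Main obstacle.} The delicate step is the prior mass computation, because the combinatorial factor $\log\binom{T}{S}$ together with the Lipschitz constant $(BD)^L$ of the network each contribute several powers of $\log n$. The whole argument works only because the architecture in the theorem statement was chosen so that these two sources of logarithmic slack are absorbed exactly into the $\log^2 n$ factor of $n\varepsilon_n^2$. Everything else (the approximation result and the tempered-posterior inequality) is essentially off-the-shelf; the novelty relative to \cite{Rockova2018} is the use of the Rényi-divergence bound, which automatically yields concentration in the \emph{expected} $L_2$-distance rather than its empirical counterpart without any additional assumption.
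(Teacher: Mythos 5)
Your overall architecture is the same as the paper's: reduce to an expectation bound $\mathbb{E}[\int \|f_\theta-f_0\|_2^2\,\pi_{n,\alpha}(d\theta)]\lesssim\varepsilon_n^2$, apply Markov twice to get concentration (exactly the argument in the paper's Appendix~A), invoke the deep-ReLU approximation lemma of Schmidt-Hieber/Rockova-Polson to control the bias, verify a prior-mass/extended-prior-mass condition by counting balls in the spike-and-slab prior, and feed this into a tempered-posterior / R\'enyi divergence inequality. The prior-mass bookkeeping ($S\log(B/\eta_n)+\log\binom{T}{S}\lesssim n\varepsilon_n^2$ under the architecture of the theorem) is correct as sketched, and the bias bound matches the paper's.

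However, there is one genuine gap in the final step, where you pass from R\'enyi divergence to $L_2$ distance. You write that ``both $f_\theta$ and $f_0$ are uniformly bounded (by $B$ through the architecture, \ldots)'' and use this to claim $D^{(\alpha)}(P_\theta,P_0)\ge c_{\alpha,\sigma,B}\|f_\theta-f_0\|_2^2$ for a constant depending only on $(\alpha,\sigma,B)$. This is not true: the model constrains the individual \emph{weights} of $\theta$ to lie in $[-B,B]$, not the network output. The recursive propagation in the paper's own proof (Appendix~B) gives $\sup_x|f_\theta(x)|\le B^L D^{L-1}\bigl(d+1+\tfrac{1}{BD-1}\bigr)$, which grows polynomially in $n$ once the architecture $(L,D)$ from Theorem~1 is substituted. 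The convexity (chord) inequality you have in mind, $e^{-t}\le 1-\tfrac{1-e^{-M}}{M}t$ on $[0,M]$, produces a constant $c\asymp(1-e^{-M})/M$ that vanishes as $M=\tfrac{\alpha(1-\alpha)}{2\sigma^2}\|f_\theta-f_0\|_\infty^2\to\infty$, and since the posterior puts mass on parameters whose outputs blow up to scale $(BD)^L$, your lower bound on $D^{(\alpha)}$ degenerates. The paper sidesteps this by treating $D_\alpha(P_\theta,P^0)=\tfrac{\alpha}{2\sigma^2}\|f_\theta-f_0\|_2^2$ as an identity for the (averaged conditional) R\'enyi divergence in Gaussian regression, which does not require a separate lower bound at all; you could adopt the same identity, but you should then be explicit about the convention for $D_\alpha$, since for the joint single-observation R\'enyi divergence (the one that enters the Chernoff identity for i.i.d.\ random-design data) the relation is only an upper bound $D_\alpha\le\tfrac{\alpha}{2\sigma^2}\|f_\theta-f_0\|_2^2$, which points in the wrong direction for the argument. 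As written, your ``elementary convexity bound'' step does not close.
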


In order to prove Theorem \ref{thm-concentration}, we actually have to check that the so-called \textit{prior mass} condition is satisfied:
\begin{equation}
\label{prior-mass-condition}
    {\pi}\bigg(\theta\in\Theta_{S,L,D} \hspace{0.1cm} \big/ \hspace{0.1cm} \|f_\theta-f_0\|_2^2 \leq r_n \bigg) \geq e^{-nr_n}.
\end{equation}
This assumption, introduced in \cite{ghosal2000convergence} in order to obtain the concentration of the regular posterior distribution states that the prior must give enough mass to some neighborhood of the true parameter. As shown in \cite{bhattacharya2016bayesian}, this condition is even sufficient for tempered posteriors. Actually, this inequality was first stated using the KL divergence instead of the expected $L_2$-distance (see Condition 2.4 in Theorem 2.1 in \cite{ghosal2000convergence}), but the KL metric is equivalent to the squared $L_2$-metric in regression problems
with Gaussian noise. This prior mass condition gives us the rate of convergence of the tempered posterior $r_n=n^{\frac{-2\beta}{2\beta+d}}$ (up to a squared logarithmic factor) which is known to be optimal when estimating $\beta$-H\"older smooth functions \citep{tsybakov2008}. Note that the $\log^2 n$ term is common in the theoretical deep learning literature \citep{Imaizumi19DNN,suzuki2019adaptivity,SchmidtHieberDNN}.

\begin{rmk}
The number of parameters of order $n^{\frac{2d}{2\beta+d}}/\log n \in [n^{2/3}/\log(n),n^2/\log(n)]$ is high compared to standard machine learning methods, which may lead to overfitting and hence prevent the procedure from achieving the minimax rate of convergence. The sparsity parameter $S$ which gives a network with a small number of nonzero parameters along with the spike-and-slab prior help us tackle this issue and obtain optimal rates of convergence (up to logarithmic factors). 
\end{rmk}

\subsection{A generalization error bound}

The result we state in this subsection applies to a wide range of activation functions, including the popular ReLU activation and the identity map:

\begin{asm}
\label{asm1}
In the following, we assume that the activation function $\rho$ is $1$-Lispchitz continuous (with respect to the aboluste value) and is such that for any $x\in\mathbb{R}$, $|\rho(x)|\leq|x|$.
\end{asm}

We do not assume any longer that the regression function is $\beta$-H\"older and we consider any structure $(S,L,D)$. The following theorem gives a generalization error bound when using variational approximations instead of exact tempered posteriors for DNNs. The proof is given in Appendix \ref{app:proof-thm-2} and is based on PAC-Bayes theory \citep{CatoniThermo,guedj2019primer}:

\begin{thm}
 \label{thm-general}
For any $\alpha\in(0,1)$,
\begin{equation}
\label{main-thm-fixed}
 \mathbb{E} \bigg[ \int \|f_\theta-f_0\|_2^2 \tilde{\pi}_{n,\alpha}(d\theta) \bigg] \leq \frac{2}{1-\alpha} \inf_{\theta^* \in \Theta_{S,L,D}} \|f_{\theta^*}-f_0\|_2^2 + \frac{2}{1-\alpha} \bigg(1+\frac{\sigma^2}{\alpha}\bigg) r_n^{S,L,D} ,  
\end{equation}
with 
$$
r_n^{S,L,D} = \frac{LS}{n}\log(BD) + \frac{2S}{n}\log(BLD) + \frac{S}{n} \log\bigg(7dL\max\big(\frac{n}{S},1\big)\bigg).
$$
\end{thm}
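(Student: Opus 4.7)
The plan is to prove this via a PAC-Bayes oracle inequality specialized to the Gaussian tempered likelihood, followed by an explicit construction of a variational competitor that nearly saturates the infimum in that inequality. This follows the variational-Bayes consistency template of \cite{Tempered,Plage,cherief2018consistency}, adapted to the spike-and-slab deep-learning setting.

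The first step is to establish an oracle inequality of the form
$$
\mathbb{E}\!\left[\int \|f_\theta - f_0\|_2^2\, \tilde{\pi}_{n,\alpha}(d\theta)\right] \;\leq\; \frac{2}{1-\alpha}\,\inf_{q\,\in\, \mathcal{F}_{S,L,D}}\!\left\{\int \|f_\theta - f_0\|_2^2\, q(d\theta) + \frac{2\sigma^2}{\alpha\, n}\, \mathrm{KL}(q\|\pi)\right\}.
$$
The derivation combines the defining minimization property of $\tilde{\pi}_{n,\alpha}$ in \eqref{VBdefinition} tested against an arbitrary competitor $q\in\mathcal{F}_{S,L,D}$; the Donsker--Varadhan dual representation of KL divergence, applied to the centered tempered loss $\theta \mapsto -\tfrac{\alpha}{2\sigma^2}\sum_i[(Y_i-f_\theta(X_i))^2 - (Y_i - f_0(X_i))^2]$; and the explicit Gaussian moment-generating identity $\mathbb{E}_{Y\mid X}[\exp(-\tfrac{\alpha}{2\sigma^2}((Y-f_\theta(X))^2 - (Y-f_0(X))^2))] = \exp(-\tfrac{\alpha(1-\alpha)}{2\sigma^2}(f_\theta(X) - f_0(X))^2)$. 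Jensen (on the convex map $-\log \mathbb{E}(\cdot)$) and Fubini then convert the log-MGF into a linear risk term, and the prefactor $\alpha(1-\alpha)/(2\sigma^2)$ produced by this computation is exactly what yields the $\tfrac{2}{1-\alpha}$ multiplier on the right-hand side.

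The second step is to construct, for any $\theta^* \in \Theta_{S,L,D}$ with sparsity pattern $\gamma^* \in \mathcal{S}_T^S$, a variational distribution $q_{\theta^*} \in \mathcal{F}_{S,L,D}$ concentrated at $\theta^*$, by setting $\pi_\gamma = \delta_{\gamma^*}$ and choosing each interval $[l_t,u_t]$ (for $t$ with $\gamma^*_t = 1$) to be a length-$2\delta$ window around $\theta^*_t$, for some $\delta>0$ to be optimized later. Two ingredients are then needed. First, Assumption~\ref{asm1} ($\rho$ is $1$-Lipschitz with $|\rho(x)|\leq|x|$) implies that $\theta \mapsto f_\theta$ is Lipschitz on $\Theta_{S,L,D}$ with log-Lipschitz constant of order $L\log(BD)$, obtained by unrolling the $L$ layers and bounding each intermediate operator norm by $BD$; this yields $\int \|f_\theta - f_0\|_2^2\, q_{\theta^*}(d\theta) \leq \|f_{\theta^*} - f_0\|_2^2 + C\delta^2 S L^2 (BD)^{2L}$. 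Second, the KL decomposes cleanly as $\mathrm{KL}(q_{\theta^*}\|\pi) = \log|\mathcal{S}_T^S| + S\log(B/\delta)$, where the combinatorial term is controlled via $T \leq LD(D+1)$ together with $\log\binom{T}{S} \leq S\log(eT/S)$.

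The final step is to balance $\delta$ so that the approximation-error correction $C\delta^2 S L^2 (BD)^{2L}$ is of order $1/n$, i.e.\ $\delta \asymp 1/(\sqrt{n}\,L(BD)^L)$; this choice converts $\log(1/\delta)$ into the leading $L\log(BD)$ factor responsible for the first term $\tfrac{LS}{n}\log(BD)$ of $r_n^{S,L,D}$, while the subleading $\log L, \log B$ pieces together with the combinatorial $\log\binom{T}{S}$ account for the remaining $\tfrac{2S}{n}\log(BLD)$ and $\tfrac{S}{n}\log(7dL\max(n/S,1))$ contributions. Taking the infimum over $\theta^* \in \Theta_{S,L,D}$ then yields the stated bound. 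The main obstacle lies in this last bookkeeping: one must extract the sharp $L\log(BD)$ scaling of the log-Lipschitz constant while avoiding exponential blow-up in $S$ (the assumption $|\rho(x)|\leq|x|$ is crucial here, as it prevents intermediate activations from amplifying beyond the product of weight-norm bounds), and one must carefully massage the combinatorial term so that it collapses into the announced form $\tfrac{S}{n}\log(7dL\max(n/S,1))$.
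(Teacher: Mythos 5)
Your proposal follows the same three-step template the paper itself uses: the R\'enyi/PAC-Bayes oracle inequality for tempered posteriors (via Donsker--Varadhan and the Gaussian likelihood-ratio identity), a triangle decomposition against $\theta^*$, and an explicit spike-and-slab competitor $q_{\theta^*}\in\mathcal{F}_{S,L,D}$ built from small uniform windows around the $S$ active coefficients, with the width tuned against the $(BD)^L$ layer-unrolled Lipschitz bound. The KL bookkeeping ($\log\binom{T}{S}$ plus a per-active-coordinate $\log(B/\delta)$ term) also matches.

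Two bookkeeping points would trip you up as stated. First, the prefactor produced by the PAC-Bayes/R\'enyi step is $\frac{1}{1-\alpha}$, not $\frac{2}{1-\alpha}$: dividing the R\'enyi bound $\mathbb{E}[\int D_\alpha\,\tilde{\pi}_{n,\alpha}] \le \inf_q\{\frac{\alpha}{1-\alpha}\int\mathrm{KL}(P^0,P_\theta)\,q + \frac{\mathrm{KL}(q\|\pi)}{n(1-\alpha)}\}$ by $\frac{\alpha}{2\sigma^2}$ gives a $\frac{1}{1-\alpha}$ front factor, and the advertised $\frac{2}{1-\alpha}$ arises only afterwards from $\|f_\theta - f_0\|_2^2 \le 2\|f_\theta - f_{\theta^*}\|_2^2 + 2\|f_{\theta^*}-f_0\|_2^2$; if you also load a factor $2$ into the oracle inequality you will end up with $\frac{4}{1-\alpha}$, not $\frac{2}{1-\alpha}$. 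Second, the $\max(n/S,1)$ inside $r_n^{S,L,D}$ does not come from the combinatorial term $\log\binom{T}{S}$ (that, together with $T\le LD(D+1)$, feeds the $\frac{2S}{n}\log(BLD)$ piece); it comes from $\log(1/\delta^2)$, since the optimal window width is $\delta^2 \asymp \frac{S}{n}\,(BD)^{-2L} L^{-2}$, so $\log(1/\delta^2) = \log(n/S) + 2L\log(BD) + O(\log(dL))$. Also, the perturbation bound $\int\|f_\theta-f_{\theta^*}\|_2^2\,q_{\theta^*}(d\theta)$ scales like $C\delta^2 d^2 L^2 (BD)^{2L}$ with no separate factor of $S$; the $S$ enters only through the choice of $\delta^2 \propto S/n$. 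None of these break the plan, but they must be fixed to land on the announced constants.
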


The oracle inequality \eqref{main-thm-fixed} ensures consistency of variational Bayes for estimating neural networks and provides the associated rate of convergence given the structure $(S,L,D)$. Indeed, if $f_0$ is a neural network with structure $(S,L,D)$, then the infimum term on the right hand side of the inequality vanishes and we obtain a rate of convergence of order 
$$
r_n^{S,L,D} \sim \max\bigg(\frac{S\log (nL/S)}{n},\frac{LS\log D}{n}\bigg),
$$
which underlines a linear dependence on the number of layers and the sparsity. In fact, this rate of convergence is determined by the \textit{extended prior mass condition} \citep{Tempered,cherief2018consistency,cherief2018consistency2}, which requires that in addition to the previous prior mass condition of \cite{ghosal2000convergence} and \cite{bhattacharya2016bayesian}, the variational set $\mathcal{F}_{S,L,D}$ must contain probability distributions $q$ that are concentrated enough around the true generating function $f_0$. One of the main findings of Theorem \ref{thm-general} is that our choice of the sparse spike-and-slab variational set $\mathcal{F}_{S,L,D}$ is rich enough and that both conditions are actually similar and lead to the same rate of convergence. Hence, the rate of convergence is the one that satisfies the prior mass condition \eqref{prior-mass-condition}. In particular, as the prior distribution is uniform over the parameter space, the negative logarithm of the prior mass of the neighborhood of the true regression function in Equation \eqref{prior-mass-condition} is a local covering entropy, that is the logarithm of the number of $r_n^{S,L,D}$-balls needed to cover a neighborhood of the true regression function. Especially, it has been shown in previous studies that this local covering entropy fully characterizes the rate of convergence of the empirical risk minimizer for DNNs \citep{SchmidtHieberDNN,suzuki2019adaptivity}. The rate $r_n^{S,L,D}$ we obtain in this work is exactly of the same order than the upper bound on the covering entropy number given in Lemma 5 in \cite{SchmidtHieberDNN} and in Lemma 3 in \cite{suzuki2019adaptivity} which derive rates of convergence for the empirical risk minimizer using different proof techniques. Note that replacing a uniform by a Gaussian in the prior and variational distributions leads to the same rate of convergence, see Appendix \ref{app:Gauss}.

Nevertheless, deep neural networks are mainly used for their computational efficiency and their ability to approach complex functions, which makes the task of estimating a neural network not so popular in machine learning. As said earlier, \cite{Imaizumi19DNN} used neural networks for estimating non-smooth functions. In such a context where the neural network model is misspecified, our generalization error bound is robust and still holds, and satisfies the best possible balance between bias and variance.

Indeed, the upper bound on the generalization error on the right-hand-side of \eqref{main-thm-fixed} is mainly divided in two parts: the approximation error of $f_0$ by a DNN $f_{\theta^*}$ in $\Theta_{S,L,D}$ (i.e. the bias) and the estimation error $r_n^{S,L,D}$ of a neural network $f_{\theta^*}$ in $\Theta_{S,L,D}$ (i.e. the variance). For instance, even if the generalization power is decreasing linearly with respect to the number of layers compared to the logarithmic dependence on the width due to the variance term, this effect is compensated by the benefits of depth in the approximation theory of deep learning. Then, as there exists relationships between the bias/the variance and the architecture of a neural network (respectively due to the approximation theory/the form of $r_n^{S,L,D}$), Theorem \ref{thm-general} gives both a general formula for deriving rates of convergence for variational approximations and insight on the way to choose the architecture. We choose the architecture that minimizes the right-hand-side of \eqref{main-thm-fixed}, which can lead to minimax estimators for smooth functions. It also connects the approximation and estimation theories following previous studies. This was done for instance by \cite{SchmidtHieberDNN,suzuki2019adaptivity,Imaizumi19DNN} who exploited the effectiveness of ReLU activation function in terms of approximation ability \citep{Yarotsky2017,Petersen2018Approximation} for H\"older/Besov smooth and piecewise smooth generating functions. 

Now we illustrate Theorem \ref{thm-general} on H\"older smooth functions. The following result shows that the variational approximation achieves the same rate of convergence than the posterior distribution it approximates, and even the minimax rate of convergence if the architecture is well chosen. We present both consistency and concentration results.

\begin{cor}
 \label{cor-concentration}
 Let us fix $\alpha \in (0,1)$. We consider the ReLU activation function.
Assume that $f_0$ is $\beta$-H\"older smooth with $0<\beta<d$. Then with $L$, $D$ and $S$ defined as in Theorem \ref{thm-concentration}, the variational approximation of the tempered posterior distribution $\tilde{\pi}_{n,\alpha}$ is consistent and hence concentrates at the minimax rate $r_n = n^{\frac{-2\beta}{2\beta+d}}$ (up to a squared logarithmic factor):
\begin{equation*}
\label{thm-concentration-holder}
\tilde{\pi}_{n,\alpha}\bigg(\theta\in\Theta_{S,L,D} \hspace{0.1cm} \big/ \hspace{0.1cm} \|f_\theta-f_0\|_2^2>M_n \cdot n^{\frac{-2\beta}{2\beta+d}} \cdot \log^2 n \bigg) \xrightarrow[n\rightarrow +\infty]{} 0
\end{equation*}
in probability as $n\rightarrow+\infty$ for any $M_n\rightarrow +\infty$.

\end{cor}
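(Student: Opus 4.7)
The plan is to derive Corollary~\ref{cor-concentration} as a direct consequence of the generic oracle inequality in Theorem~\ref{thm-general}, combined with the ReLU approximation theory for H\"older functions that already underlies Theorem~\ref{thm-concentration}, and then to convert the consistency statement into a concentration statement by a Markov-type argument.

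First, I would apply Theorem~\ref{thm-general} to the fixed architecture $(S,L,D)$ specified in Theorem~\ref{thm-concentration}. The bound splits into an approximation term $\inf_{\theta^*\in\Theta_{S,L,D}}\|f_{\theta^*}-f_0\|_2^2$ and an estimation term of order $r_n^{S,L,D}$. For the estimation term, it is a direct algebraic check to substitute $L\asymp\log n$, $D\asymp n^{d/(2\beta+d)}/\log n$, and $S\lesssim LD$: each of the three summands in $r_n^{S,L,D}$ is then of order $n^{-2\beta/(2\beta+d)}\log^2 n$, because $S/n\asymp n^{-2\beta/(2\beta+d)}\log^{-1} n$ and $L\log(BD)\asymp\log^2 n$. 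Hence the variance term is already at the desired near-minimax rate $r_n\log^2 n$.

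The approximation term is controlled by exactly the ReLU approximation result that is used to verify the prior-mass condition \eqref{prior-mass-condition} in Theorem~\ref{thm-concentration}: for every $\beta$-H\"older function $f_0$ with $0<\beta<d$ there exists a parameter $\theta^*\in\Theta_{S,L,D}$, for the chosen architecture, such that $\|f_{\theta^*}-f_0\|_\infty^2\lesssim n^{-2\beta/(2\beta+d)}\log^2 n$, hence the same upper bound holds for $\|f_{\theta^*}-f_0\|_2^2$ up to a constant depending only on $d$. Plugging both bounds back into Theorem~\ref{thm-general} yields
\[
\mathbb{E}\!\left[\int\|f_\theta-f_0\|_2^2\,\tilde{\pi}_{n,\alpha}(d\theta)\right]\lesssim n^{-\frac{2\beta}{2\beta+d}}\log^2 n,
\]
which is exactly the consistency statement at the minimax rate.

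Finally, to pass from consistency (the integrated $L_2$-error is $O(r_n\log^2 n)$) to the concentration statement of the corollary, I would apply Markov's inequality inside the variational measure: for any $M_n\to\infty$,
\[
\tilde{\pi}_{n,\alpha}\!\left(\|f_\theta-f_0\|_2^2>M_n\,n^{-\frac{2\beta}{2\beta+d}}\log^2 n\right)\leq\frac{\int\|f_\theta-f_0\|_2^2\,\tilde{\pi}_{n,\alpha}(d\theta)}{M_n\,n^{-\frac{2\beta}{2\beta+d}}\log^2 n},
\]
take expectations on both sides, invoke the consistency bound just proved, and conclude convergence to $0$ in $L^1$, hence in probability. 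The only delicate step is the approximation-theoretic input, but since it is exactly the statement already needed (and proved) for Theorem~\ref{thm-concentration} via \citet{Rockova2018}, it can be invoked verbatim; the rest of the argument is a routine plug-in into Theorem~\ref{thm-general} followed by Markov's inequality.
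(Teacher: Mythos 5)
Your proposal follows essentially the same route as the paper's proof in Appendix~C: specialize Theorem~\ref{thm-general} to the architecture of Theorem~\ref{thm-concentration}, control the approximation term via the ReLU approximation result of \cite{Rockova2018} (Lemma~5.1/Theorem~6.1), bound the estimation term $r_n^{S,L,D}$ by plugging in $L\asymp\log n$, $D\asymp n^{d/(2\beta+d)}/\log n$, $S\lesssim DL$, and then pass from the expected integrated $L_2$-error to concentration via Markov's inequality (cf.\ Appendix~\ref{app:connection}). One small slip to correct: since $S\asymp DL$ (not $S\asymp D$), one has $S/n\asymp n^{-2\beta/(2\beta+d)}$ rather than $n^{-2\beta/(2\beta+d)}\log^{-1}n$; with your stated intermediate the dominant term $(LS/n)\log(BD)$ would only give $n^{-2\beta/(2\beta+d)}\log n$, whereas the correct $S/n\asymp n^{-2\beta/(2\beta+d)}$ together with $L\log(BD)\asymp\log^2 n$ yields the claimed $n^{-2\beta/(2\beta+d)}\log^2 n$.
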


\subsection{Optimization error}

In this subsection, we discuss the effect of an optimization error that is independent on the previous statistical error. Indeed, in the variational Bayes community, people use approximate algorithms in practice to solve the optimization problem \eqref{VBdefinition} when the model is non-conjugate, i.e. the VB solution is not available in closed-form. This is the case here when considering a sparse spike-and-slab variational approximation in $\mathcal{F}_{S,L,D}$ for DNNs with hyperparameters $\phi=(\pi_{\gamma},(\phi_t)_{1\leq t\leq T})$ and an algorithm that gives a sequence of hyperparameters $(\phi^k)_{k\geq 1}$ and associated variational approximations $(\tilde{\pi}_{n,\alpha}^k)_{k\geq 1}$. The following theorem gives a statistical guarantee for any approximation $\tilde{\pi}_{n,\alpha}^k$, $k\geq 1$:

\begin{thm}
 \label{thm-opti}
For any $\alpha\in(0,1)$,
\begin{align*}
 \mathbb{E} \bigg[ \int \|f_\theta-f_0\|_2^2 \tilde{\pi}_{n,\alpha}^k(d\theta) \bigg] \leq \frac{2}{1-\alpha} \inf_{\theta^*}  \|f_{\theta^*}-f_0\|_2^2 + \frac{2}{1-\alpha} &\bigg(1+\frac{\sigma^2}{\alpha}\bigg) r_n^{S,L,D} \\ 
 & + \frac{2\sigma^2}{\alpha(1-\alpha)} \cdot \frac{\mathbb{E}[\mathcal{L}^*_n-\mathcal{L}_n^k]}{n},
\end{align*}
where $\mathcal{L}^*_n$ is the maximum of the evidence lower bound i.e. the ELBO evaluated at $\tilde{\pi}_{n,\alpha}$, while $\mathcal{L}_n^k$ is the ELBO evaluated at $\tilde{\pi}_{n,\alpha}^k$.
\end{thm}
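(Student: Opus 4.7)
The plan is to adapt the PAC-Bayes proof of Theorem \ref{thm-general} by isolating the single step where the exact optimality of $\tilde{\pi}_{n,\alpha}$ is invoked, and then quantifying the slack created when this exact minimizer is replaced by the algorithmic iterate $\tilde{\pi}_{n,\alpha}^k$. Conceptually, the optimization error enters only through one explicit inequality, and the rest of the argument is literally unchanged.

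First I would recall the key inequality driving Theorem \ref{thm-general}. Via a Catoni-style change-of-measure argument combined with the quadratic structure of the Gaussian nonparametric regression loss, one obtains, for every (possibly data-dependent) probability distribution $\rho$ on $\Theta_{S,L,D}$ absolutely continuous with respect to $\pi$, an inequality of the form
$$
(1-\alpha)\,\mathbb{E}\!\left[\int \|f_\theta-f_0\|_2^2\,\rho(d\theta)\right] \;\le\; 2\,\|f_{\theta^*}-f_0\|_2^2 \;+\; \frac{2\sigma^2}{\alpha n}\,\mathbb{E}\!\left[\mathcal{L}_n(q)-\mathcal{L}_n(\rho)\right],
$$
valid for any fixed $q\in\mathcal{F}_{S,L,D}$ and any $\theta^*\in\Theta_{S,L,D}$, where $q$ can be taken concentrated around $\theta^*$. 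In the proof of Theorem \ref{thm-general}, the choice $\rho=\tilde{\pi}_{n,\alpha}$ together with the exact optimality $\mathcal{L}_n(q)\le\mathcal{L}_n(\tilde{\pi}_{n,\alpha})$ eliminates the ELBO-gap term, after which one selects $q=q^*$ concentrated on a small neighborhood of the best approximation $\theta^*$ of $f_0$ in $\Theta_{S,L,D}$; the remaining $\mathrm{KL}(q^*\|\pi)$ contribution yields the rate $r_n^{S,L,D}$.

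For Theorem \ref{thm-opti} I would apply the exact same inequality with $\rho=\tilde{\pi}_{n,\alpha}^k$. The only modification lies in the optimality step: since $\mathcal{L}_n^\ast\ge\mathcal{L}_n^k$ without equality in general, one now writes
$$
\mathcal{L}_n(q^*)-\mathcal{L}_n\bigl(\tilde{\pi}_{n,\alpha}^k\bigr)\;=\;\bigl(\mathcal{L}_n(q^*)-\mathcal{L}_n^\ast\bigr)+\bigl(\mathcal{L}_n^\ast-\mathcal{L}_n^k\bigr)\;\le\;\mathcal{L}_n^\ast-\mathcal{L}_n^k,
$$
where the first bracket is nonpositive by the definition of $\mathcal{L}_n^\ast$. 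Plugged into the master inequality, this contributes precisely the extra term $\tfrac{2\sigma^2}{\alpha(1-\alpha)n}\,\mathbb{E}[\mathcal{L}_n^\ast-\mathcal{L}_n^k]$ on the right-hand side. Taking the infimum over $\theta^*\in\Theta_{S,L,D}$ and using the same $q^*$ as before yields the unchanged terms $\frac{2}{1-\alpha}\inf_{\theta^*}\|f_{\theta^*}-f_0\|_2^2$ and $\frac{2}{1-\alpha}(1+\sigma^2/\alpha)r_n^{S,L,D}$, together with the new ELBO-gap remainder.

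The argument is essentially a bookkeeping exercise. The only thing to be careful about is matching the multiplicative constants: the factor $\tfrac{2\sigma^2}{\alpha}$ arises from converting between the expected squared residual and the scaled loss $\tfrac{\alpha}{2\sigma^2}\ell(\theta)$ that appears inside the ELBO; the factor $\tfrac{1}{1-\alpha}$ is inherited directly from the Catoni-style PAC-Bayes inequality for $\alpha$-tempered posteriors; and the $\tfrac{1}{n}$ normalizes the unnormalized sum inside $\mathcal{L}_n$. No new probabilistic tool beyond those already needed for Theorem \ref{thm-general} is required, and the validity of the master inequality for a data-dependent $\rho$ coming from an arbitrary algorithm is automatic, because the underlying PAC-Bayes bound holds uniformly in $\rho$.
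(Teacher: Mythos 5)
Your high-level strategy is the one the paper follows: re-derive the PAC-Bayes bound in a form that holds uniformly in $\rho$ (because the algorithmic iterate $\tilde{\pi}_{n,\alpha}^k$ is no longer the exact ELBO maximizer, one cannot take the result of \cite{Tempered} off the shelf), then introduce $\mathcal{L}_n^*$ as a pivot so that the difference $\mathcal{L}_n^*-\mathcal{L}_n^k$ pops out explicitly, and finally bound the remaining $\tilde{\pi}_{n,\alpha}$-type terms exactly as in Theorem~\ref{thm-general}. This matches the paper's proof step for step, and the decomposition $\mathcal{L}_n(q^*)-\mathcal{L}_n^k = (\mathcal{L}_n(q^*)-\mathcal{L}_n^*)+(\mathcal{L}_n^*-\mathcal{L}_n^k)\le \mathcal{L}_n^*-\mathcal{L}_n^k$ is exactly the right trick.

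However, your displayed ``master inequality'' is wrong as stated, and the error is not cosmetic. Starting from the Donsker--Varadhan/Jensen step one actually obtains, for any fixed $q$ and any $\theta^*$,
\begin{equation*}
(1-\alpha)\,\mathbb{E}\!\left[\int \|f_\theta-f_0\|_2^2\,\rho(d\theta)\right]
\le 2\|f_{\theta^*}-f_0\|_2^2
+ 2\!\int\!\|f_\theta-f_{\theta^*}\|_2^2\,q(d\theta)
+ \frac{2\sigma^2}{\alpha n}\,\textrm{KL}(q\|\pi)
+ \frac{2\sigma^2}{\alpha n}\,\mathbb{E}\bigl[\mathcal{L}_n(q)-\mathcal{L}_n(\rho)\bigr],
\end{equation*}
i.e.\ the $q$-dependent concentration and prior-mass terms must sit on the right-hand side \emph{in addition to} the ELBO-gap term; they cannot be absorbed into $\mathcal{L}_n(q)$ because they enter via $-\mathbb{E}[\mathcal{L}_n(q)]$, which has the opposite sign to the $+\mathcal{L}_n(q)$ appearing in your gap. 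Taken literally, your two-term bound applied with $\rho=\tilde{\pi}_{n,\alpha}$ and $\mathcal{L}_n(q)\le\mathcal{L}_n^*$ would yield $(1-\alpha)\,\mathbb{E}[\int\|f_\theta-f_0\|_2^2\,\tilde{\pi}_{n,\alpha}(d\theta)]\le 2\|f_{\theta^*}-f_0\|_2^2$ with no rate term at all, which is manifestly false; and your own subsequent sentence about ``the remaining $\textrm{KL}(q^*\|\pi)$ contribution'' refers to terms your inequality does not contain. Once the two extra $q$-terms are restored, your argument goes through and reduces to the paper's proof (the paper reaches the same place through a slightly different bookkeeping, writing the $\tilde{\pi}_{n,\alpha}$ contribution as the $\inf_q$ expression and then interchanging $\inf$ and $\mathbb{E}$, while you invoke $\mathcal{L}_n(q^*)\le\mathcal{L}_n^*$ directly; the two are equivalent).
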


We establish a clear connection between the convergence (in mean) of the ELBO $\mathcal{L}_n^k$ to $\mathcal{L}^*_n$ and the consistency of our algorithm $\tilde{\pi}_{n,\alpha}^k$. Indeed, as soon as the ELBO $\mathcal{L}_n^k$ converges at rate $c_{k,n}$, then our variational approximation $\tilde{\pi}_{n,\alpha}^k$ is consistent at rate:
$$
\max\bigg(\frac{c_{k,n}}{n},\frac{S\log (nL/S)}{n},\frac{SL\log D}{n}\bigg).
$$
In particular, as soon as $k$ is such that $c_{k,n}\leq\max(S\log n,S\log D)$, then we obtain consistency of $\tilde{\pi}_{n,\alpha}^k$ at rate $r_n^{S,L,D}$, i.e. $\tilde{\pi}_{n,\alpha}^k$ and $\tilde{\pi}_{n,\alpha}$ have the same rate of convergence.

However, deriving the convergence of the ELBO is a hard task. For instance, when considering a simple Gaussian mean-field approximation without sparsity, the variational objective $\mathcal{L}_n$ can be maximized using either stochastic \citep{GravesVI2011,Blundell2015} or natural gradient methods \citep{KhanBayesianDeepLearning2018} on the parameters of the Gaussian approximation. The convergence of the ELBO is often met in practice \citep{QuasiMonteCarloBuchholz18a,SlangKhan2018} and the recent work of \cite{KhanBayesianDeepLearning2019} even showed that Bayesian deep learning enables practical deep learning and matches the performance of standard methods while preserving benefits of Bayesian principles. Nevertheless, the objective is nonconvex and hence it is difficult to prove the convergence to a global maximum in theory. Some recent papers studied global convergence properties of gradient descent algorithms for frequentist classification and regression losses \citep{GlobalMinimaDNNDu,ConvergenceDeepLearningAllenZhu} that we may extend to gradient descent algorithms for the ELBO objective such as Variational Online Gauss Newton or Vadam \citep{KhanBayesianDeepLearning2018,KhanBayesianDeepLearning2019}. 

Another point is to develop and study more complex algorithms than simple gradient descent that deal with spike-and-slab sparsity-inducing variational inference, as for instance \cite{SpikeAndSlabVITitsias2011} did for multi-task and multiple kernel learning. Also, \cite{Louizos2018SparseDL} connected sparse spike-and-slab variational inference with $L_0$-norm regularization for neural networks and proposed a solution to the intractability of the $L_0$-penalty term through the use of non-negative stochastic gates, while \cite{DeepRewiring2018} proposed an algorithm preserving sparsity during training. Nevertheless, these optimization concerns fall beyond the scope of this paper and are left for further research.

\section{Architecture design via ELBO maximization}
\label{section-Model-selection}
We saw in Section \ref{section-DeepVI} that the choice of the architecture of the neural network is crucial and can lead to faster convergence and better approximation. In this section, we formulate the architecture design of DNNs as a model selection problem and we investigate the ELBO maximization strategy which is very popular in the variational Bayes community. This approach is different from \cite{Rockova2018} which is fully Bayesian and treats the parameters of the network architecture, namely the depth, the width and the sparsity, as random variables. We show that the ELBO criterion does not overfit and is adaptive: it provides a variational approximation with the optimal rate of convergence, and it does not require the knowledge of the unknown aspects of the regression function $f_0$ (e.g. the level of smoothness for smooth functions) to select the optimal variational approximation.

We denote $\mathcal{M}_{S,L,D}$ the statistical model associated with the parameter set $\Theta_{S,L,D}$. We consider a countable number of models, and we introduce prior beliefs $\pi_{S,L,D}$ over the sparsity, the depth and the width of the network, that can be defined hierarchically and that are known beforehand. For instance, the prior beliefs can be chosen such that $\pi_L=2^{-L}$, $\pi_{D|L}$ follows a uniform distribution over $\{d,...,\max(e^L,d)\}$ given $L$, and $\pi_{S|L,D}$ a uniform distribution over $\{1,...,T\}$ given $L$ and $D$ (we recall that $T$ is the number of coefficients in a fully connected network). This particular choice is sensible as it allows to consider any number of hidden layers and (at most) an exponentially large width with respect to the depth of the network. We still consider spike-and-slab priors on $\theta_{S,L,D} \in \Theta_{S,L,D}$ given model $\mathcal{M}_{S,L,D}$.

Each tempered posterior associated with model $\mathcal{M}_{S,L,D}$ is denoted $\pi_{n,\alpha}^{S,L,D}$. We recall that the variational approximation $\tilde{\pi}_{n,\alpha}^{S,L,D}$ associated with model $\mathcal{M}_{S,L,D}$ is defined as the distribution into the variational set $\mathcal{F}_{S,L,D}$ that maximizes the Evidence Lower Bound:
\begin{equation*}
\tilde{\pi}_{n,\alpha}^{S,L,D} = \argmax_{q^{S,L,D} \in \mathcal{F}_{S,L,D}} \mathcal{L}_n(q^{S,L,D}).
\end{equation*}
We will simply denote in the following $\mathcal{L}^*_n(S,L,D)$ the closest approximation to the log-evidence i.e., the value of the ELBO evaluated at its maximum: 
$$
\mathcal{L}_n^*(S,L,D)=\mathcal{L}_n(\tilde{\pi}_{n,\alpha}^{S,L,D}).
$$

The model selection criterion we use here to select the architecture of the network is a slight penalized variant of the classical ELBO criterion \citep{Blei2017ReviewVB} with strong theoretical guarantees \citep{cherief2018consistency2} :
\begin{equation*}
\label{ELBOcriterion}
(\hat{S},\hat{L},\hat{D}) = \argmax_{S,L,D} \bigg\{ \mathcal{L}^*_n(S,L,D) - {\log\bigg(\frac{1}{\pi_{S,L,D}}\bigg)} \bigg\}.
\end{equation*}
For any choice of the prior beliefs $\pi_{S,L,D}$, compute the ELBO for each model $\mathcal{M}_{S,L,D}$ using an algorithm that will converge to $\mathcal{L}^*_n(S,L,D)$ and choose the architecture that maximizes the penalized ELBO criterion. It is possible to restrict to a finite number of layers in practice (for instance, a factor of $n$ or $\log n$).

The following theorem shows that this ELBO criterion leads to a variational approximation with the optimal rate of convergence:

\begin{thm}
 \label{thm-model-selection}
For any $\alpha\in(0,1)$,
\begin{align*}
 \mathbb{E} \bigg[ \int \|f_\theta-f_0\|_2^2 \tilde{\pi}_{n,\alpha}^{\hat{S},\hat{L},\hat{D}}(d\theta) \bigg] \leq \inf_{S,L,D} \bigg\{ \frac{2}{1-\alpha} \inf_{\theta^* \in \Theta_{S,L,D}} \|f_{\theta^*} -f_0\|_2^2 & + \frac{2}{1-\alpha} \bigg(1+\frac{\sigma^2}{\alpha}\bigg) r_n^{S,L,D} \\
 & + \frac{2\sigma^2}{\alpha(1-\alpha)} \frac{\log(\frac{1}{\pi_{S,L,D}})}{n} \bigg\} .
\end{align*}
\end{thm}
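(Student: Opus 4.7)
The plan is to lift the argument of Theorem~\ref{thm-general} to a joint space that combines the architecture index $(S,L,D)$ with the network parameters, and then exploit the fact that the penalized ELBO criterion is nothing but a variational-free-energy minimizer on this joint space. Concretely, I would work on the countable disjoint union $\bigsqcup_{S,L,D}\Theta_{S,L,D}$, equipped with the hierarchical prior $\pi^{\mathrm{joint}}(d\theta,S,L,D) = \pi_{S,L,D}\,\pi^{S,L,D}(d\theta)$, where $\pi^{S,L,D}$ denotes the spike-and-slab prior on $\Theta_{S,L,D}$, together with the corresponding tempered posterior. The restricted joint variational family I would consider consists of all product distributions $\delta_{(S,L,D)}\otimes q$ with $q\in\mathcal{F}_{S,L,D}$; under this restriction, the joint Kullback--Leibler divergence to $\pi^{\mathrm{joint}}$ factors cleanly as $\log(1/\pi_{S,L,D}) + \mathrm{KL}(q\|\pi^{S,L,D})$.

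The next step is to observe that the model-selected variational approximation $\delta_{(\hat{S},\hat{L},\hat{D})}\otimes\tilde{\pi}_{n,\alpha}^{\hat{S},\hat{L},\hat{D}}$ is precisely the minimizer of the joint variational free energy over this restricted family: minimizing the latter amounts to first optimizing $q$ inside each $\mathcal{F}_{S,L,D}$, which yields $\tilde{\pi}_{n,\alpha}^{S,L,D}$ and the value $-\mathcal{L}_n^*(S,L,D)$, and then picking the architecture that minimizes $-\mathcal{L}_n^*(S,L,D)+\log(1/\pi_{S,L,D})$, which is exactly the penalized ELBO criterion defining $(\hat{S},\hat{L},\hat{D})$. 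I would then apply, verbatim on this joint space, the PAC-Bayes machinery underlying Theorem~\ref{thm-general} (a Catoni-type change-of-measure argument together with the self-bounded squared-loss computation for Gaussian noise), using as reference distribution any element $\delta_{(S,L,D)}\otimes q$ of the restricted family. Since the joint KL splits additively, this delivers
\begin{align*}
\mathbb{E}\!\int\!\|f_\theta-f_0\|_2^2\,\tilde{\pi}_{n,\alpha}^{\hat{S},\hat{L},\hat{D}}(d\theta) \leq \inf_{S,L,D}\inf_{q\in\mathcal{F}_{S,L,D}}\Bigg\{&\frac{2}{1-\alpha}\!\int\!\|f_\theta-f_0\|_2^2\,q(d\theta) \\
&+\frac{2\sigma^2}{\alpha(1-\alpha)}\cdot\frac{\mathrm{KL}(q\|\pi^{S,L,D})+\log(1/\pi_{S,L,D})}{n}\Bigg\}.
\end{align*}

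Finally, I would fix an arbitrary architecture $(S,L,D)$ and treat the inner infimum exactly as in the proof of Theorem~\ref{thm-general}: choose $q$ as a sparse spike-and-slab distribution tightly concentrated in a small box around the best in-model approximant $\theta^*\in\Theta_{S,L,D}$ of $f_0$, invoke Assumption~\ref{asm1} to turn parameter-space proximity into $L_2$-proximity of the network outputs, and bound the resulting KL divergence by $n\,r_n^{S,L,D}$, thereby recovering the bias-plus-variance term $\frac{2}{1-\alpha}\|f_{\theta^*}-f_0\|_2^2+\frac{2}{1-\alpha}(1+\sigma^2/\alpha)\,r_n^{S,L,D}$. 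Adding the now-separated $\frac{2\sigma^2}{\alpha(1-\alpha)}\log(1/\pi_{S,L,D})/n$ penalty and taking the infimum over $(S,L,D)$ then yields the claimed oracle inequality. The only genuinely new ingredient compared with Theorem~\ref{thm-general} is the joint-space PAC-Bayes step, and the main obstacle is to verify that restricting the joint variational family to architecture-Diracs is harmless for the change-of-measure identity; this is granted precisely because the $\log(1/\pi_{S,L,D})$ penalty in the model-selection rule matches the KL contribution from the discrete Dirac component, so the restricted minimizer is the quantity naturally appearing in the joint PAC-Bayes bound.
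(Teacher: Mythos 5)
Your proposal is correct and is essentially the paper's own argument: the paper invokes the joint-space PAC-Bayes oracle inequality of Theorem~3 in \cite{cherief2018consistency2} (where the penalized ELBO is recognized as the restricted variational free-energy minimizer over $\delta_{(S,L,D)}\otimes q$), then plugs in the same $q_n^*$ construction and triangle-inequality split as in Theorem~\ref{thm-general}; you simply re-derive the cited lemma inline. One small slip: your intermediate display should carry the coefficient $\frac{1}{1-\alpha}$ in front of $\int\|f_\theta-f_0\|_2^2\,q(d\theta)$ (the factor $2$ only enters after splitting $\|f_\theta-f_0\|_2^2\le 2\|f_\theta-f_{\theta^*}\|_2^2+2\|f_{\theta^*}-f_0\|_2^2$), but this is corrected downstream and does not affect the final constants.
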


This inequality shows that as soon as the complexity term $\log(1/\pi_{S,L,D})/n$ that reflects the prior beliefs is lower than the effective rate of convergence that balances the accuracy and the estimation error $r_n^{S,L,D}$, the selected variational approximation adaptively achieves the best possible rate. For instance, it leads to (near-)minimax rates for H\"older smooth functions and selects the optimal architecture even without the knowledge of $\beta$, which was required in the previous section. Note that for the previous choice of prior beliefs $\pi_L=2^{-L}$, $\pi_{D|L}=1/(\max(e^L,d)-d+1)$, $\pi_{S|L,D}=1/T$, we get:
$$
\frac{\log(\frac{1}{\pi_{S,L,D}})}{n} \leq \frac{2\log (D+1) + \log L + \max(L,\log d) + L \log2}{n}
$$
that is lower than $r_n^{S,L,D}$ (up to a factor) and hence the ELBO criterion does not overfit.

\section{Discussion}
\label{section-Discussion}
In this paper, we provided theoretical justifications for neural networks from a Bayesian point of view using sparse variational inference. We derived new generalization error bounds and we showed that sparse variational approximations of DNNs achieve (near-)minimax optimality when the regression function is H\"older smooth. All our results directly imply concentration of the approximation of the posterior distribution. We also proposed an automated method for selecting an architecture of the network with optimal consistency guarantees via the ELBO maximization framework.

We think that one of the main challenges here is the design of new computational algorithms for spike-and-slab deep learning in the wake of the work of \cite{SpikeAndSlabVITitsias2011} for multi-task and multiple kernel learning, or those of \cite{Louizos2018SparseDL} and \cite{DeepRewiring2018}. In the latter paper, the authors designed an algorithm for training deep networks while simultaneously learning their sparse connectivity allowing for fast and computationally efficient learning, whereas most approaches have focused on compressing already trained neural networks.

In the same time, a future point of interest is the study of the global convergence of these approximate algorithms in nonconvex settings i.e. study of the theoretical convergence of the ELBO. This work was conducted for frequentist gradient descent algorithms \citep{ConvergenceDeepLearningAllenZhu,GlobalMinimaDNNDu}. Such studies should be investigated for Bayesian gradient descents, as well as for algorithms that preserve the sparsity of the network during training.

\vspace{-0.2cm}

\acks{We would like to warmly thank Pierre Alquier for his helpful suggestions on early versions of this work. }

\vspace{-0.2cm}


\newpage

\vskip 0.2in

\begin{thebibliography}{78}
\providecommand{\natexlab}[1]{#1}
\providecommand{\url}[1]{\texttt{#1}}
\expandafter\ifx\csname urlstyle\endcsname\relax
  \providecommand{\doi}[1]{doi: #1}\else
  \providecommand{\doi}{doi: \begingroup \urlstyle{rm}\Url}\fi

\bibitem[Allen-Zhu et~al.(2019)Allen-Zhu, Li, and
  Song]{ConvergenceDeepLearningAllenZhu}
Zeyuan Allen-Zhu, Yuanzhi Li, and Zhao Song.
\newblock A convergence theory for deep learning via over-parameterization.
\newblock In Kamalika Chaudhuri and Ruslan Salakhutdinov, editors,
  \emph{Proceedings of the 36th International Conference on Machine Learning},
  volume~97 of \emph{Proceedings of Machine Learning Research}, pages 242--252,
  Long Beach, California, USA, 09--15 Jun 2019. PMLR.
\newblock URL \url{http://proceedings.mlr.press/v97/allen-zhu19a.html}.

\bibitem[Alquier and Ridgway(2017)]{Tempered}
P.~Alquier and J.~Ridgway.
\newblock Concentration of tempered posteriors and of their variational
  approximations.
\newblock \emph{arXiv preprint arXiv:1706.09293}, 2017.

\bibitem[Alquier et~al.(2016)Alquier, Ridgway, and
  Chopin]{alquier2016properties}
P.~Alquier, J.~Ridgway, and N.~Chopin.
\newblock On the properties of variational approximations of {G}ibbs
  posteriors.
\newblock \emph{JMLR}, 17\penalty0 (239):\penalty0 1--41, 2016.

\bibitem[Baldi and Hornik(1989)]{BaldiOptiNNPCA1989}
Pierre Baldi and Kurt Hornik.
\newblock Neural networks and principal component analysis: Learning from
  examples without local minima'', ne.
\newblock \emph{Neural Networks}, 2:\penalty0 53--58, 12 1989.
\newblock \doi{10.1016/0893-6080(89)90014-2}.

\bibitem[Barron(1993)]{Barron93NN}
Andrew Barron.
\newblock Barron, a.e.: Universal approximation bounds for superpositions of a
  sigmoidal function. ieee trans. on information theory 39, 930-945.
\newblock \emph{Information Theory, IEEE Transactions on}, 39:\penalty0 930 --
  945, 06 1993.
\newblock \doi{10.1109/18.256500}.

\bibitem[Barron(1994)]{Barron94estimation}
Andrew~R Barron.
\newblock Approximation and estimation bounds for artificial neural networks.
\newblock \emph{Machine Learning}, 14\penalty0 (1):\penalty0 115--133, 1994.

\bibitem[Bartlett et~al.(2017)Bartlett, Foster, and
  Telgarsky]{Bartlett2017MarginBoundsNNs}
Peter~L Bartlett, Dylan~J Foster, and Matus~J Telgarsky.
\newblock Spectrally-normalized margin bounds for neural networks.
\newblock In I.~Guyon, U.~V. Luxburg, S.~Bengio, H.~Wallach, R.~Fergus,
  S.~Vishwanathan, and R.~Garnett, editors, \emph{Advances in Neural
  Information Processing Systems 30}, pages 6240--6249. Curran Associates,
  Inc., 2017.
\newblock URL
  \url{http://papers.nips.cc/paper/7204-spectrally-normalized-margin-bounds-for-neural-networks.pdf}.

\bibitem[Baum and Petrie(1966)]{HiddenMarkovModel1966}
Leonard~E. Baum and Ted Petrie.
\newblock Statistical inference for probabilistic functions of finite state
  markov chains.
\newblock \emph{Ann. Math. Statist.}, 37\penalty0 (6):\penalty0 1554--1563, 12
  1966.
\newblock \doi{10.1214/aoms/1177699147}.
\newblock URL \url{https://doi.org/10.1214/aoms/1177699147}.

\bibitem[Behrens et~al.(2012)Behrens, Friel, and Hurn]{behrens2012tuning}
G.~Behrens, N.~Friel, and M.~Hurn.
\newblock Tuning tempered transitions.
\newblock \emph{Statistics and computing}, 22\penalty0 (1):\penalty0 65--78,
  2012.

\bibitem[Bellec et~al.(2018)Bellec, Kappel, Maass, and
  Legenstein]{DeepRewiring2018}
Guillaume Bellec, David Kappel, Wolfgang Maass, and Robert Legenstein.
\newblock Deep rewiring: Training very sparse deep networks.
\newblock In \emph{International Conference on Learning Representations}, 2018.
\newblock URL \url{https://openreview.net/forum?id=BJ_wN01C-}.

\bibitem[Bengio and Delalleau(2011)]{BengioDelalleau2011}
Yoshua Bengio and Olivier Delalleau.
\newblock On the expressive power of deep architectures.
\newblock In \emph{Proceedings of the 22Nd International Conference on
  Algorithmic Learning Theory}, ALT'11, pages 18--36, Berlin, Heidelberg, 2011.
  Springer-Verlag.
\newblock ISBN 978-3-642-24411-7.
\newblock URL \url{http://dl.acm.org/citation.cfm?id=2050345.2050349}.

\bibitem[Bhattacharya et~al.(2016)Bhattacharya, Pati, and
  Yang]{bhattacharya2016bayesian}
A.~Bhattacharya, D.~Pati, and Y.~Yang.
\newblock Bayesian fractional posteriors.
\newblock \emph{arXiv preprint arXiv:1611.01125, to appear in the Annals of
  Statistics}, 2016.

\bibitem[Bhattacharya et~al.(2018)Bhattacharya, Pati, and Yang]{Plage}
A.~Bhattacharya, D.~Pati, and Y.~Yang.
\newblock On statistical optimality of variational {Bayes}.
\newblock \emph{Proceedings of Machine Learning Research}, 84 - AISTAT, 2018.

\bibitem[Blei et~al.(2017)Blei, Kucukelbir, and McAuliffe]{Blei2017ReviewVB}
David~M Blei, Alp Kucukelbir, and Jon~D McAuliffe.
\newblock Variational inference: A review for statisticians.
\newblock \emph{Journal of the American Statistical Association}, 112\penalty0
  (518):\penalty0 859--877, 2017.

\bibitem[Blundell et~al.(2015)Blundell, Cornebise, Kavukcuoglu, and
  Wierstra]{Blundell2015}
Charles Blundell, Julien Cornebise, Koray Kavukcuoglu, and Daan Wierstra.
\newblock Weight uncertainty in neural networks.
\newblock In \emph{Proceedings of the 32Nd International Conference on
  International Conference on Machine Learning - Volume 37}, ICML'15, pages
  1613--1622. JMLR.org, 2015.
\newblock URL \url{http://dl.acm.org/citation.cfm?id=3045118.3045290}.

\bibitem[Boucheron et~al.(2003)Boucheron, Lugosi, and Massart]{boucheron2003}
Stéphane Boucheron, Gábor Lugosi, and Pascal Massart.
\newblock Concentration inequalities using the entropy method.
\newblock \emph{Ann. Probab.}, 31\penalty0 (3):\penalty0 1583--1614, 07 2003.
\newblock \doi{10.1214/aop/1055425791}.
\newblock URL \url{https://doi.org/10.1214/aop/1055425791}.

\bibitem[Buchholz et~al.(2018)Buchholz, Wenzel, and
  Mandt]{QuasiMonteCarloBuchholz18a}
Alexander Buchholz, Florian Wenzel, and Stephan Mandt.
\newblock Quasi-{M}onte {C}arlo variational inference.
\newblock In Jennifer Dy and Andreas Krause, editors, \emph{Proceedings of the
  35th International Conference on Machine Learning}, volume~80 of
  \emph{Proceedings of Machine Learning Research}, pages 668--677,
  Stockholmsmässan, Stockholm Sweden, 10--15 Jul 2018. PMLR.
\newblock URL \url{http://proceedings.mlr.press/v80/buchholz18a.html}.

\bibitem[Campbell and Li(2019)]{UBVICampbell2019}
Trevor Campbell and Xinglong Li.
\newblock Universal boosting variational inference.
\newblock volume arXiv:1903.05220, 2019.

\bibitem[Castillo et~al.(2015)Castillo, Schmidt-Hieber, and van~der
  Vaart]{Castillo2015SS}
Ismaël Castillo, Johannes Schmidt-Hieber, and Aad van~der Vaart.
\newblock Bayesian linear regression with sparse priors.
\newblock \emph{Ann. Statist.}, 43\penalty0 (5):\penalty0 1986--2018, 10 2015.
\newblock \doi{10.1214/15-AOS1334}.
\newblock URL \url{https://doi.org/10.1214/15-AOS1334}.

\bibitem[Catoni(2007)]{CatoniThermo}
O.~Catoni.
\newblock \emph{{PAC}-{B}ayesian supervised classification: the thermodynamics
  of statistical learning}.
\newblock Institute of Mathematical Statistics Lecture Notes---Monograph
  Series, 56. Institute of Mathematical Statistics, Beachwood, OH, 2007.

\bibitem[Ch\'erief-Abdellatif and Alquier(2018)]{cherief2018consistency}
B.~Ch\'erief-Abdellatif and P.~Alquier.
\newblock Consistency of variational bayes inference for estimation and model
  selection in mixtures.
\newblock \emph{Electronic Journal of Statistics}, 12\penalty0 (2):\penalty0
  2995--3035, 2018.
\newblock ISSN 1935-7524.
\newblock \doi{10.1214/18-EJS1475}.

\bibitem[Ch{\'e}rief-Abdellatif et~al.(2019)Ch{\'e}rief-Abdellatif, Alquier,
  and Khan]{cheriefAlquierKhan2019}
B.-E. Ch{\'e}rief-Abdellatif, P.~Alquier, and M.E. Khan.
\newblock A generalization bound for online variational inference.
\newblock Preprint arXiv:1904.03920v1, 2019.

\bibitem[Cherief-Abdellatif(2019)]{cherief2018consistency2}
Badr-Eddine Cherief-Abdellatif.
\newblock Consistency of elbo maximization for model selection.
\newblock In Francisco Ruiz, Cheng Zhang, Dawen Liang, and Thang Bui, editors,
  \emph{Proceedings of The 1st Symposium on Advances in Approximate Bayesian
  Inference}, volume~96 of \emph{Proceedings of Machine Learning Research},
  pages 11--31. PMLR, 02 Dec 2019.
\newblock URL
  \url{http://proceedings.mlr.press/v96/cherief-abdellatif19a.html}.

\bibitem[Cybenko(1989)]{Cybenko89ShallowNN}
G.~Cybenko.
\newblock {Approximation by superpositions of a sigmoidal function}.
\newblock \emph{Mathematics of Control, Signals, and Systems (MCSS)},
  2\penalty0 (4):\penalty0 303--314, December 1989.
\newblock ISSN 0932-4194.
\newblock \doi{10.1007/BF02551274}.
\newblock URL \url{http://dx.doi.org/10.1007/BF02551274}.

\bibitem[Doucet and Johansen(2009)]{DoucetParticleFiltering}
A.~Doucet and A.~Johansen.
\newblock A tutorial on particle filtering and smoothing: Fifteen years later.
\newblock \emph{Handbook of Nonlinear Filtering}, 12, 01 2009.

\bibitem[Du et~al.(2019)Du, Lee, Li, Wang, and Zhai]{GlobalMinimaDNNDu}
Simon Du, Jason Lee, Haochuan Li, Liwei Wang, and Xiyu Zhai.
\newblock Gradient descent finds global minima of deep neural networks.
\newblock In Kamalika Chaudhuri and Ruslan Salakhutdinov, editors,
  \emph{Proceedings of the 36th International Conference on Machine Learning},
  volume~97 of \emph{Proceedings of Machine Learning Research}, pages
  1675--1685, Long Beach, California, USA, 09--15 Jun 2019. PMLR.
\newblock URL \url{http://proceedings.mlr.press/v97/du19c.html}.

\bibitem[Gal(2016)]{YarinGalThesis}
Yarin Gal.
\newblock \emph{Uncertainty in Deep Learning}.
\newblock PhD thesis, University of Cambridge, 2016.

\bibitem[Ghosal et~al.(2000)Ghosal, Ghosh, and van~der
  Vaart]{ghosal2000convergence}
Subhashis Ghosal, Jayanta~K. Ghosh, and Aad~W. van~der Vaart.
\newblock Convergence rates of posterior distributions.
\newblock \emph{Ann. Statist.}, 28\penalty0 (2):\penalty0 500--531, 04 2000.
\newblock \doi{10.1214/aos/1016218228}.
\newblock URL \url{https://doi.org/10.1214/aos/1016218228}.

\bibitem[Goodfellow et~al.(2014)Goodfellow, Pouget-Abadie, Mirza, Xu,
  Warde-Farley, Ozair, Courville, and Bengio]{GAN-Goodfellow}
Ian Goodfellow, Jean Pouget-Abadie, Mehdi Mirza, Bing Xu, David Warde-Farley,
  Sherjil Ozair, Aaron Courville, and Yoshua Bengio.
\newblock Generative adversarial nets.
\newblock In Z.~Ghahramani, M.~Welling, C.~Cortes, N.~D. Lawrence, and K.~Q.
  Weinberger, editors, \emph{Advances in Neural Information Processing Systems
  27}, pages 2672--2680. Curran Associates, Inc., 2014.
\newblock URL
  \url{http://papers.nips.cc/paper/5423-generative-adversarial-nets.pdf}.

\bibitem[Goodfellow et~al.(2016)Goodfellow, Bengio, and
  Courville]{Goodfellow-et-al-2016}
Ian Goodfellow, Yoshua Bengio, and Aaron Courville.
\newblock \emph{Deep Learning}.
\newblock MIT Press, 2016.
\newblock \url{http://www.deeplearningbook.org}.

\bibitem[Graves(2011)]{GravesVI2011}
Alex Graves.
\newblock Practical variational inference for neural networks.
\newblock In J.~Shawe-Taylor, R.~S. Zemel, P.~L. Bartlett, F.~Pereira, and
  K.~Q. Weinberger, editors, \emph{Advances in Neural Information Processing
  Systems 24}, pages 2348--2356. Curran Associates, Inc., 2011.
\newblock URL
  \url{http://papers.nips.cc/paper/4329-practical-variational-inference-for-neural-networks.pdf}.

\bibitem[Grohs et~al.(2019)Grohs, Perekrestenko, Elbrächter, and
  Bölcskei]{DNNApproximationTheory2019}
Philipp Grohs, Dmytro Perekrestenko, Dennis Elbrächter, and Helmut Bölcskei.
\newblock Deep neural network approximation theory, 01 2019.

\bibitem[Gr{\"u}nwald and Van~Ommen(2017)]{grunwaldmisspecifiation}
P.~D. Gr{\"u}nwald and T.~Van~Ommen.
\newblock Inconsistency of {B}ayesian inference for misspecified linear models,
  and a proposal for repairing it.
\newblock \emph{Bayesian Analysis}, 12\penalty0 (4):\penalty0 1069--1103, 2017.

\bibitem[Guedj(2019)]{guedj2019primer}
B.~Guedj.
\newblock A primer on pac-bayesian learning.
\newblock \emph{arXiv preprint arXiv:1901.05353}, 2019.

\bibitem[Hayakawa and Suzuki(2019)]{Suzuki2019Superiority}
Satoshi Hayakawa and Taiji Suzuki.
\newblock On the minimax optimality and superiority of deep neural network
  learning over sparse parameter spaces.
\newblock \emph{arXiv preprint arXiv:1905.09195}, 2019.

\bibitem[Hinton and van Camp(1993)]{HintonVanCamp1993}
Geoffrey~E. Hinton and Drew van Camp.
\newblock Keeping the neural networks simple by minimizing the description
  length of the weights.
\newblock In \emph{Proceedings of the Sixth Annual Conference on Computational
  Learning Theory}, COLT '93, pages 5--13, New York, NY, USA, 1993. ACM.
\newblock ISBN 0-89791-611-5.
\newblock \doi{10.1145/168304.168306}.
\newblock URL \url{http://doi.acm.org/10.1145/168304.168306}.

\bibitem[Hoffman et~al.(2013)Hoffman, Blei, Wang, and
  Paisley]{hoffman2013stochastic}
M.~D. Hoffman, D.~M. Blei, C.~Wang, and J.~Paisley.
\newblock Stochastic variational inference.
\newblock \emph{The Journal of Machine Learning Research}, 14\penalty0
  (1):\penalty0 1303--1347, 2013.

\bibitem[Huggins et~al.(2018)Huggins, Campbell, Kasprzak, and
  Broderick]{HugginsBroderick2018PracticalBounds}
Jonathan~H. Huggins, Trevor Campbell, Mikolaj Kasprzak, and Tamara Broderick.
\newblock Practical bounds on the error of bayesian posterior approximations: A
  nonasymptotic approach.
\newblock \emph{ArXiv}, abs/1809.09505, 2018.

\bibitem[Imaizumi and Fukumizu(2019)]{Imaizumi19DNN}
Masaaki Imaizumi and Kenji Fukumizu.
\newblock Deep neural networks learn non-smooth functions effectively.
\newblock In Kamalika Chaudhuri and Masashi Sugiyama, editors,
  \emph{Proceedings of Machine Learning Research}, volume~89 of
  \emph{Proceedings of Machine Learning Research}, pages 869--878. PMLR, 16--18
  Apr 2019.
\newblock URL \url{http://proceedings.mlr.press/v89/imaizumi19a.html}.

\bibitem[Jaiswal et~al.(2019{\natexlab{a}})Jaiswal, Rao, and
  Honnappa]{jaiswal2019asymptotic}
P.~Jaiswal, V.~A. Rao, and H.~Honnappa.
\newblock Asymptotic consistency of $\alpha$-r\'enyi-approximate posteriors.
\newblock Preprint arXiv:1902.01902, 2019{\natexlab{a}}.

\bibitem[Jaiswal et~al.(2019{\natexlab{b}})Jaiswal, Honnappa, and
  Rao]{Jaiswal2019RiskSensitiveVB}
Prateek Jaiswal, Harsha Honnappa, and Vinayak~A. Rao.
\newblock Risk-sensitive variational bayes: Formulations and bounds.
\newblock volume arXiv:1906.01235, 2019{\natexlab{b}}.

\bibitem[Jordan et~al.(1999)Jordan, Ghahramani, Jaakkola, and
  Saul]{VIJordan1999}
M.~I. Jordan, Z.~Ghahramani, T.~S. Jaakkola, and L.~K. Saul.
\newblock An introduction to variational methods for graphical models.
\newblock \emph{Machine Learning}, 37:\penalty0 183--233, 1999.

\bibitem[Kawaguchi(2016)]{KawaguchiNoPoorLocalMinima2016}
Kenji Kawaguchi.
\newblock Deep learning without poor local minima.
\newblock In D.~D. Lee, M.~Sugiyama, U.~V. Luxburg, I.~Guyon, and R.~Garnett,
  editors, \emph{Advances in Neural Information Processing Systems 29}, pages
  586--594. Curran Associates, Inc., 2016.
\newblock URL
  \url{http://papers.nips.cc/paper/6112-deep-learning-without-poor-local-minima.pdf}.

\bibitem[Kawaguchi et~al.(2019)Kawaguchi, Huang, and
  Kaelbling]{Kawaguchi2018EffectOfDepth}
Kenji Kawaguchi, Jiaoyang Huang, and Leslie~Pack Kaelbling.
\newblock Effect of depth and width on local minima in deep learning.
\newblock \emph{Neural Computation}, 31\penalty0 (6):\penalty0 1462--1498,
  2019.

\bibitem[Khan et~al.(2018)Khan, Nielsen, Tangkaratt, Lin, Gal, and
  Srivastava]{KhanBayesianDeepLearning2018}
Mohammad Khan, Didrik Nielsen, Voot Tangkaratt, Wu~Lin, Yarin Gal, and Akash
  Srivastava.
\newblock Fast and scalable {B}ayesian deep learning by weight-perturbation in
  {A}dam.
\newblock In Jennifer Dy and Andreas Krause, editors, \emph{Proceedings of the
  35th International Conference on Machine Learning}, volume~80 of
  \emph{Proceedings of Machine Learning Research}, pages 2611--2620,
  Stockholmsmässan, Stockholm Sweden, 10--15 Jul 2018. PMLR.
\newblock URL \url{http://proceedings.mlr.press/v80/khan18a.html}.

\bibitem[Kingma and Welling(2013)]{kingma2013auto}
Diederik~P Kingma and Max Welling.
\newblock Auto-encoding variational bayes.
\newblock \emph{arXiv preprint arXiv:1312.6114}, 2013.

\bibitem[Lecun et~al.(1998)Lecun, Bottou, Bengio, and
  Haffner]{CNNreference98lecun}
Yann Lecun, Léon Bottou, Yoshua Bengio, and Patrick Haffner.
\newblock Gradient-based learning applied to document recognition.
\newblock In \emph{Proceedings of the IEEE}, pages 2278--2324, 1998.

\bibitem[LeCun et~al.(2015)LeCun, Bengio, and Hinton]{deeplearningbooklecun}
Yann LeCun, Yoshua Bengio, and Geoffrey Hinton.
\newblock Deep learning.
\newblock \emph{Nature}, 521\penalty0 (7553):\penalty0 436--444, 5 2015.
\newblock ISSN 0028-0836.
\newblock \doi{10.1038/nature14539}.

\bibitem[Louizos et~al.(2018)Louizos, Welling, and Kingma]{Louizos2018SparseDL}
Christos Louizos, Max Welling, and Diederik~P. Kingma.
\newblock Learning sparse neural networks through $l_0$-regularization.
\newblock In \emph{International Conference on Learning Representations}, 2018.
\newblock URL \url{https://openreview.net/forum?id=H1Y8hhg0b}.

\bibitem[MacKay(1992{\natexlab{a}})]{McKay1992BNN}
David J.~C. MacKay.
\newblock A practical bayesian framework for backpropagation networks.
\newblock \emph{Neural Computation}, 4\penalty0 (3):\penalty0 448--472,
  1992{\natexlab{a}}.
\newblock \doi{10.1162/neco.1992.4.3.448}.
\newblock URL \url{https://doi.org/10.1162/neco.1992.4.3.448}.

\bibitem[MacKay(1992{\natexlab{b}})]{McKayPhD}
David J.~C. MacKay.
\newblock \emph{Bayesian methods for adaptive models}.
\newblock PhD thesis, California Institute of Technology, 1992{\natexlab{b}}.

\bibitem[Minka(2001)]{MinkaEP}
T.~P. Minka.
\newblock Expectation propagation for approximate bayesian inference.
\newblock In \emph{Proceedings of the 17th Conference in Uncertainty in
  Artificial Intelligence}, UAI '01, pages 362--369, San Francisco, CA, USA,
  2001. Morgan Kaufmann Publishers Inc.
\newblock ISBN 1-55860-800-1.
\newblock URL \url{http://dl.acm.org/citation.cfm?id=647235.720257}.

\bibitem[Mishkin et~al.(2018)Mishkin, Kunstner, Nielsen, Schmidt, and
  Khan]{SlangKhan2018}
Aaron Mishkin, Frederik Kunstner, Didrik Nielsen, Mark Schmidt, and
  Mohammad~Emtiyaz Khan.
\newblock Slang: Fast structured covariance approximations for bayesian deep
  learning with natural gradient.
\newblock In S.~Bengio, H.~Wallach, H.~Larochelle, K.~Grauman, N.~Cesa-Bianchi,
  and R.~Garnett, editors, \emph{Advances in Neural Information Processing
  Systems 31}, pages 6245--6255. Curran Associates, Inc., 2018.

\bibitem[Neal(1995)]{NealPhD}
Radford.~M. Neal.
\newblock \emph{Bayesian learning for neural networks}.
\newblock PhD thesis, University of Toronto, 1995.

\bibitem[Neyshabur et~al.(2018)Neyshabur, Bhojanapalli, and
  Srebro]{Srebro2018PACMarginBoundsNNs}
Behnam Neyshabur, Srinadh Bhojanapalli, and Nathan Srebro.
\newblock A {PAC}-bayesian approach to spectrally-normalized margin bounds for
  neural networks.
\newblock In \emph{International Conference on Learning Representations}, 2018.
\newblock URL \url{https://openreview.net/forum?id=Skz_WfbCZ}.

\bibitem[Nguyen et~al.(2019)Nguyen, Mukkamala, and
  Hein]{Nguyen2019LandscapeDNN}
Quynh Nguyen, Mahesh~Chandra Mukkamala, and Matthias Hein.
\newblock On the loss landscape of a class of deep neural networks with no bad
  local valleys.
\newblock In \emph{International Conference on Learning Representations}, 2019.
\newblock URL \url{https://openreview.net/forum?id=HJgXsjA5tQ}.

\bibitem[Opper and Archambeau(2008)]{OpperArchambeau2009}
Manfred Opper and Cedric Archambeau.
\newblock The variational gaussian approximation revisited.
\newblock \emph{Neural computation}, 21:\penalty0 786--92, 10 2008.
\newblock \doi{10.1162/neco.2008.08-07-592}.

\bibitem[Osawa et~al.(2019)Osawa, Swaroop, Jain, Eschenhagen, Turner, Yokota,
  and Khan]{KhanBayesianDeepLearning2019}
Kazuki Osawa, Siddharth Swaroop, Anirudh Jain, Runa Eschenhagen, Richard~E.
  Turner, Rio Yokota, and Mohammad~Emtiyaz Khan.
\newblock Practical deep learning with bayesian principles, 2019.
\newblock URL \url{http://arxiv.org/abs/1906.02506}.
\newblock cite arxiv:1906.02506Comment: Under review.

\bibitem[Petersen and Voigtländer(2017)]{Petersen2018Approximation}
Philipp Petersen and Felix Voigtländer.
\newblock Optimal approximation of piecewise smooth functions using deep relu
  neural networks.
\newblock \emph{Neural Networks}, 09 2017.
\newblock \doi{10.1016/j.neunet.2018.08.019}.

\bibitem[Rockova and Polson(2018)]{Rockova2018}
Veronika Rockova and nicholas Polson.
\newblock Posterior concentration for sparse deep learning.
\newblock In S.~Bengio, H.~Wallach, H.~Larochelle, K.~Grauman, N.~Cesa-Bianchi,
  and R.~Garnett, editors, \emph{Advances in Neural Information Processing
  Systems 31}, pages 930--941. Curran Associates, Inc., 2018.

\bibitem[Rolnick and Tegmark(2018)]{RolnickTegmark2018PowerOfDepth}
David Rolnick and Max Tegmark.
\newblock The power of deeper networks for expressing natural functions.
\newblock In \emph{6th International Conference on Learning Representations,
  {ICLR} 2018, Vancouver, BC, Canada, April 30 - May 3, 2018, Conference Track
  Proceedings}, 2018.
\newblock URL \url{https://openreview.net/forum?id=SyProzZAW}.

\bibitem[Rumelhart et~al.(1986)Rumelhart, Hinton, and
  Williams]{RumelhartRNN1986}
David~E. Rumelhart, Geoffrey~E. Hinton, and Ronald~J. Williams.
\newblock {Learning Representations by Back-propagating Errors}.
\newblock \emph{Nature}, 323\penalty0 (6088):\penalty0 533--536, 1986.
\newblock \doi{10.1038/323533a0}.
\newblock URL \url{http://www.nature.com/articles/323533a0}.

\bibitem[Schmidt-Hieber(2017)]{SchmidtHieberDNN}
Johannes Schmidt-Hieber.
\newblock Nonparametric regression using deep neural networks with relu
  activation function.
\newblock \emph{ArXiv}, arxiv:1708.06633, 2017.

\bibitem[Sheth and Khardon(2017)]{RoniKhardonExcessRiskBounds2017}
Rishit Sheth and Roni Khardon.
\newblock Excess risk bounds for the bayes risk using variational inference in
  latent gaussian models.
\newblock In I.~Guyon, U.~V. Luxburg, S.~Bengio, H.~Wallach, R.~Fergus,
  S.~Vishwanathan, and R.~Garnett, editors, \emph{Advances in Neural
  Information Processing Systems 30}, pages 5151--5161. Curran Associates,
  Inc., 2017.

\bibitem[Silver et~al.(2017)Silver, Schrittwieser, Simonyan, Antonoglou, Huang,
  Guez, Hubert, Baker, Lai, Bolton, Chen, Lillicrap, Hui, Sifre, van~den
  Driessche, Graepel, and Hassabis]{alphago}
David Silver, Julian Schrittwieser, Karen Simonyan, Ioannis Antonoglou, Aja
  Huang, Arthur Guez, Thomas Hubert, Lucas Baker, Matthew Lai, Adrian Bolton,
  Yutian Chen, Timothy Lillicrap, Fan Hui, Laurent Sifre, George van~den
  Driessche, Thore Graepel, and Demis Hassabis.
\newblock Mastering the game of go without human knowledge.
\newblock \emph{Nature}, 550:\penalty0 354--, October 2017.
\newblock URL \url{http://dx.doi.org/10.1038/nature24270}.

\bibitem[Soudry and Carmon(2016)]{SoudryLocalMinima2016}
Daniel Soudry and Yair Carmon.
\newblock No bad local minima: Data independent training error guarantees for
  multilayer neural networks.
\newblock 05 2016.

\bibitem[Srivastava et~al.(2014)Srivastava, Hinton, Krizhevsky, Sutskever, and
  Salakhutdinov]{DropoutSrivastava2014}
Nitish Srivastava, Geoffrey Hinton, Alex Krizhevsky, Ilya Sutskever, and Ruslan
  Salakhutdinov.
\newblock Dropout: A simple way to prevent neural networks from overfitting.
\newblock \emph{Journal of Machine Learning Research}, 15:\penalty0 1929--1958,
  2014.
\newblock URL \url{http://jmlr.org/papers/v15/srivastava14a.html}.

\bibitem[Stanford et~al.(2000)Stanford, Giardina, Gerhardt, Fukumizu, and
  Amari]{Amari2000PlateausDNN}
J.A. Stanford, K~Giardina, G.A. Gerhardt, Kenji Fukumizu, and Shun-ichi Amari.
\newblock Local minima and plateaus in hierarchical structures of multilayer
  perceptrons.
\newblock \emph{Neural Networks}, 13, 05 2000.
\newblock \doi{10.1016/S0893-6080(00)00009-5}.

\bibitem[Suzuki(2018)]{Suzuki18DNNkerels}
Taiji Suzuki.
\newblock Fast generalization error bound of deep learning from a kernel
  perspective.
\newblock In Amos Storkey and Fernando Perez-Cruz, editors, \emph{Proceedings
  of the Twenty-First International Conference on Artificial Intelligence and
  Statistics}, volume~84 of \emph{Proceedings of Machine Learning Research},
  pages 1397--1406, Playa Blanca, Lanzarote, Canary Islands, 09--11 Apr 2018.
  PMLR.
\newblock URL \url{http://proceedings.mlr.press/v84/suzuki18a.html}.

\bibitem[Suzuki(2019)]{suzuki2019adaptivity}
Taiji Suzuki.
\newblock Adaptivity of deep re{LU} network for learning in besov and mixed
  smooth besov spaces: optimal rate and curse of dimensionality.
\newblock In \emph{International Conference on Learning Representations}, 2019.
\newblock URL \url{https://openreview.net/forum?id=H1ebTsActm}.

\bibitem[Titsias and L\'{a}zaro-Gredilla(2011)]{SpikeAndSlabVITitsias2011}
Michalis~K. Titsias and Miguel L\'{a}zaro-Gredilla.
\newblock Spike and slab variational inference for multi-task and multiple
  kernel learning.
\newblock In J.~Shawe-Taylor, R.~S. Zemel, P.~L. Bartlett, F.~Pereira, and
  K.~Q. Weinberger, editors, \emph{Advances in Neural Information Processing
  Systems 24}, pages 2339--2347. Curran Associates, Inc., 2011.

\bibitem[Tonolini et~al.(2019)Tonolini, Jensen, and
  Murray-Smith]{VariationalSparseCoding2019}
Francesco Tonolini, Bjorn~Sand Jensen, and Roderick Murray-Smith.
\newblock Variational sparse coding, 2019.
\newblock URL \url{https://openreview.net/forum?id=SkeJ6iR9Km}.

\bibitem[Tsybakov(2008)]{tsybakov2008}
Alexandre~B. Tsybakov.
\newblock \emph{Introduction to Nonparametric Estimation}.
\newblock Springer Publishing Company, Incorporated, 1st edition, 2008.
\newblock ISBN 0387790519, 9780387790510.

\bibitem[Vladimirova et~al.(2019)Vladimirova, Verbeek, Mesejo, and
  Arbel]{Vladimirova2019PriorsBNNsUnits}
Mariia Vladimirova, Jakob Verbeek, Pablo Mesejo, and Julyan Arbel.
\newblock Understanding priors in {B}ayesian neural networks at the unit level.
\newblock In Kamalika Chaudhuri and Ruslan Salakhutdinov, editors,
  \emph{Proceedings of the 36th International Conference on Machine Learning},
  volume~97 of \emph{Proceedings of Machine Learning Research}, pages
  6458--6467, Long Beach, California, USA, 09--15 Jun 2019. PMLR.
\newblock URL \url{http://proceedings.mlr.press/v97/vladimirova19a.html}.

\bibitem[Wang and Blei(2018)]{wang2018frequentist}
Y.~Wang and D.~M. Blei.
\newblock Frequentist consistency of variational {B}ayes.
\newblock Journal of the American Statistical Association (to appear), 2018.

\bibitem[Yarotsky(2016)]{Yarotsky2017}
Dmitry Yarotsky.
\newblock Error bounds for approximations with deep relu networks.
\newblock \emph{Neural Networks}, 94, 10 2016.
\newblock \doi{10.1016/j.neunet.2017.07.002}.

\bibitem[Zhang et~al.(2017)Zhang, Bengio, Hardt, Recht, and
  Vinyals]{ZhangUnderstandingDL2017}
Chiyuan Zhang, Samy Bengio, Moritz Hardt, Benjamin Recht, and Oriol Vinyals.
\newblock Understanding deep learning requires rethinking generalization.
\newblock 2017.
\newblock URL \url{https://arxiv.org/abs/1611.03530}.

\bibitem[Zhang and Gao(2017)]{Chicago}
F.~Zhang and C.~Gao.
\newblock Convergence rates of variational posterior distributions.
\newblock \emph{arXiv preprint arXiv:1712.02519v1}, 2017.

\end{thebibliography}

\newpage

\newpage

\appendix
\section{Connection between concentration and consistency}
\label{app:connection}
In this appendix, we show the connection between the notions of \textit{consistency} and \textit{concentration}. 

The Bayesian estimator $\rho$ (e.g. the tempered posterior $\pi_{n,\alpha}$ or its variational approximation $\tilde{\pi}_{n,\alpha}$) is said to be consistent if its generalization error goes to zero as $n\rightarrow +\infty$:
$$
\mathbb{E} \bigg[ \int \| f_{\theta} - f_0 \|_2^2 \rho(d\theta) \bigg] \xrightarrow[n\rightarrow +\infty]{} 0.
$$
We say that the Bayesian estimator $\rho$ concentrates at rate $r_n$ \citep{ghosal2000convergence} if in probability (with respect to the random variables distributed according to the generating process), the estimator concentrates asymptotically around the true distribution as $n\rightarrow +\infty$, i.e.:
\begin{equation*}
\rho\bigg(\theta\in\Theta_{S,L,D} \hspace{0.1cm} \big/ \hspace{0.1cm} \|f_\theta-f_0\|_2^2>M_n r_n\bigg) \xrightarrow[n\rightarrow +\infty]{} 0.
\end{equation*}
in probability as $n\rightarrow+\infty$ for any $M_n\rightarrow +\infty$.

The consistency of the Bayesian distribution $\rho$ at rate $r_n$ implies its concentration at rate $r_n$. Indeed, if we we assume that $\rho$ is consistent at rate $r_n$, i.e.:
$$
\mathbb{E} \bigg[ \int \|f_\theta-f_0\|_2^2 \rho(d\theta) \bigg] \leq r_n,
$$
then, using Markov's inequality for any $M_n \rightarrow +\infty$ as $n\rightarrow+\infty$:
$$
\mathbb{E} \bigg[ \rho\bigg(\theta\in\Theta_{S,L,D} \hspace{0.1cm} \big/ \hspace{0.1cm} \|f_\theta-f_0\|_2^2>M_n r_n\bigg) \bigg] \leq \frac{\mathbb{E} \bigg[ \int \|f_\theta-f_0\|_2^2 \rho(d\theta) \bigg]}{M_n r_n} \leq \frac{r_n}{M_n r_n} = \frac{1}{M_n} \rightarrow 0.
$$
Hence, we have the convergence in mean of $\rho\big(\theta\in\Theta_{S,L,D} \hspace{0.1cm} \big/ \hspace{0.1cm} \|f_\theta-f_0\|_2^2>M_n r_n\big)$ to $0$, and then the convergence in probability of $\rho\big(\theta\in\Theta_{S,L,D} \hspace{0.1cm} \big/ \hspace{0.1cm} \|f_\theta-f_0\|_2^2>M_n r_n\big)$ to $0$, i.e.\ the concentration of $\rho$ to $f_0$ at rate $r_n$.

\section{Proof of Theorem \ref{thm-general}}
\label{app:proof-thm-2}
The structure of the proof of Theorem \ref{thm-general} is composed of three main steps. The first one consists in obtaining the general shape of the inequality using PAC-Bayes inequalities, and the two others in finding a rate that satisfies the extended prior mass condition.

\vspace{0.5cm}

\underline{\textit{First step}}: \underline{\textit{we obtain the general inequality}}

\vspace{0.5cm}

We start from inequality 2.6 in \cite{Tempered} that provides an upper bound on the generalization error but in $\alpha$-R\'enyi divergence. We denote $P^0$ the generating distribution of any $(X_i,Y_i)$ and $P_\theta$ the distribution characterizing the model. Then, for any $\alpha \in (0,1)$:
   \begin{align*}
 \mathbb{E} \bigg[ \int D_{\alpha}( P_\theta,P^0 ) \tilde{\pi}_{n,\alpha}(d\theta) \bigg] \leq \inf_{q \in \mathcal{F}_{S,L,{D}}} \bigg\{ \frac{\alpha}{1-\alpha} \int \textnormal{KL}(P^0,P_\theta) q({\rm d} \theta)
 + \frac{\textnormal{KL}(q\|\pi)}{n(1-\alpha)}  \bigg\}.
 \end{align*}
Moreover, the $\alpha$-R\'enyi divergence is equal to $D_{\alpha}( P_\theta,P^0 )=\frac{\alpha}{2\sigma^2}\|f_\theta-f_0\|_2^2$ and the KL divergence is $\textnormal{KL}( P^0\|P_\theta )=\frac{1}{2\sigma^2}\|f_\theta-f_0\|_2^2$, and for any $\theta^*$, $\|f_\theta-f_0\|_2^2 \leq 2 \|f_\theta-f_{\theta^*}\|_2^2 + 2 \|f_{\theta^*}-f_0\|_2^2$. Hence, for any $\theta^* \in \Theta_{S,L,{D}}$:
\begin{align*}
\mathbb{E} \bigg[ &\int \frac{\alpha}{2\sigma^2}\|f_\theta-f_0\|_2^2 \tilde{\pi}_{n,\alpha}(d\theta) \bigg] \\
&\leq \frac{\alpha}{1-\alpha} \frac{2}{2\sigma^2}\|f_{\theta^*}-f_0\|_2^2+ \inf_{q \in \mathcal{F}_{S,L,{D}}} \bigg\{ \frac{\alpha}{1-\alpha} \int \frac{2}{2\sigma^2}\|f_{\theta}-f_{\theta^*}\|_2^2 q({\rm d} \theta)
+ \frac{\textnormal{KL}(q\|\pi)}{n(1-\alpha)}  \bigg\},
\end{align*}
i.e. for any $\theta^* \in \Theta_{S,L,{D}}$,
\begin{align*}
\mathbb{E} \bigg[ &\int \|f_\theta-f_0\|_2^2 \tilde{\pi}_{n,\alpha}(d\theta) \bigg] \\
&\leq \frac{2}{1-\alpha}\|f_{\theta^*}-f_0\|_2^2+ \inf_{q \in \mathcal{F}_{S,L,{D}}} \bigg\{ \frac{2}{1-\alpha} \int \|f_{\theta}-f_{\theta^*}\|_2^2 q({\rm d} \theta)
+ \frac{2\sigma^2}{\alpha} \frac{\textnormal{KL}(q\|\pi)}{n(1-\alpha)}  \bigg\}.
\end{align*}
 
From now on, the rest of the proof consists in finding a distribution $q_n^* \in \mathcal{F}_{S,L,{D}}$ that satisfies for $\theta^*=\argmin_{\theta \in \Theta_{S,L,{D}}} \|f_\theta-f_0\|_2$ the extended prior mass condition, i.e. that satisfies both:
\begin{equation}
\label{KLcond1}
    \int \|f_{\theta}-f_{\theta^*}\|_2^2 q_n^*({\rm d} \theta) \leq r_n
\end{equation}
and
\begin{equation}
\label{KLcond2}
    \textnormal{KL}(q_n^*\|\pi) \leq n r_n 
\end{equation}
with $r_n = \frac{SL}{n}\log(BD) + \frac{S}{n}\log(BL(D+1)^2) + \frac{S}{2n} \log\bigg(\frac{4n}{S}\bigg\{ 3 + (d+2)^2 L^2\bigg\}\bigg)$ that is smaller than $r_n^{S,L,D}$ as $3+(x+2)^2 L^2\leq 10x^2L^2$ for $x\geq1$ and $L\geq3$. This will lead to:
\begin{equation*}
 \mathbb{E} \bigg[ \int \|f_\theta-f_0\|_2^2 \tilde{\pi}_{n,\alpha}(d\theta) \bigg] \leq \frac{2}{1-\alpha} \inf_{\theta^* \in \Theta_{S,L,{D}}} \|f_{\theta^*}-f_0\|_2^2 + \frac{2}{1-\alpha} \bigg(1+\frac{\sigma^2}{\alpha}\bigg)r_n^{S,L,D} .
\end{equation*}

\vspace{0.5cm}

\underline{\textit{Second step}}: \underline{\textit{we prove Inequality \eqref{KLcond1}}}

\vspace{0.5cm}

To begin with, we define the loss of the $\ell^{th}$ layer of the neural network $f_\theta$:
$$
r_\ell(\theta) = \sup_{x\in[-1,1]^d} \sup_{1\leq i\leq D} |f_\theta^\ell(x)_i - f_{\theta^*}^\ell(x)_i| 
$$
where $f_\theta^\ell$s are defined as the partial networks:
$$
\begin{cases}
f_\theta^0(x):=x, \\
f_\theta^\ell(x):= \rho(A_\ell f_\theta^{\ell-1}(x) + b_\ell) \hspace{0.3cm} \text{for} \hspace{0.1cm} \ell=1,...,L.
\end{cases}
$$
We also define the loss of the output layer:
$$
r_\ell(\theta) = \sup_{x\in[-1,1]^d}  |f_\theta^L(x) - f_{\theta^*}^L(x)| = \sup_{x\in[-1,1]^d}  |f_\theta(x) - f_{\theta^*}(x)|.
$$

We will prove by induction that for any $\ell=1,...,L$:
\begin{equation*}
    r_\ell(\theta) \leq (BD)^{\ell}\bigg(d+1+\frac{1}{BD-1}\bigg) \sum_{u=1}^\ell \tilde{A}_u + \sum_{u=1}^\ell (BD)^{\ell-u} \tilde{b}_u 
\end{equation*}
where $\tilde{A}_u=\sup_{i,j} |A_{u,i,j}-A_{u,i,j}^*|$ and $\tilde{b}_u=\sup_{j} |b_{u,j}-b_{u,j}^*|$. To do so, we will also prove by induction that:
\begin{equation*}
    c_\ell \leq B^\ell D^{\ell-1}\bigg(d+1+\frac{1}{BD-1}\bigg)
\end{equation*}
where 
$$
\begin{cases}
c_\ell = \sup_{x\in[-1,1]^d} \sup_{1\leq i\leq D} |f_{\theta^*}^\ell(x)_i|  \hspace{0.3cm} \text{for} \hspace{0.1cm} \ell=1,...,L, \\
c_L = \sup_{x\in[-1,1]^d} |f_{\theta^*}(x)|,
\end{cases} 
$$
using the formula:
\begin{equation}
\label{recurrence}
x_n \leq u_n x_{n-1} + v_n \implies x_n \leq \sum_{i=2}^n \bigg( \prod_{j=i+1}^n u_j \bigg) v_i + \bigg( \prod_{j=2}^n u_j \bigg) x_1
\end{equation}
for any $n\geq2$ with the convention $\prod_{j=n+1}^n u_j=1$.

Indeed, we have according to Assumption \ref{asm1}:
\begin{itemize}
    \item Initialization:
\begin{align*}
    c_1 & = \sup_{x\in[-1,1]^d} \sup_{1\leq i\leq D} |f_{\theta^*}^1(x)_i| \\
    & \leq \sup_{x\in[-1,1]^d} \sup_{1\leq i\leq D} \bigg|\sum_{j=1}^d A_{1ij}^*x_j+b_{1i}^*\bigg| \\
    & \leq \sup_{x\in[-1,1]^d} \sup_{1\leq i\leq D} \bigg\{\sum_{j=1}^d |A_{1ij}^*|\cdot|x_j|+|b_{1i}^*|\bigg\} \\
    & \leq d\cdot B\cdot 1 + B \\
    & = (d+1)B.
\end{align*}
    \item For any layer $\ell$:
    \begin{align*}
    c_\ell & \leq \sup_{x\in[-1,1]^d} \sup_{1\leq i\leq D} \bigg|\sum_{j=1}^D A_{\ell ij}^*f_{\theta^*}^{\ell-1}(x)_j+b_{\ell i}^*\bigg| \\
    & \leq \sup_{x\in[-1,1]^d} \sup_{1\leq i\leq D} \bigg\{\sum_{j=1}^D |A_{\ell ij}^*|\cdot|f_{\theta^*}^{\ell-1}(x)_j|+|b_{\ell i}^*|\bigg\} \\
    & \leq D\cdot B\cdot c_{\ell-1} + B.
\end{align*}
    \item Hence, using Formula \eqref{recurrence}, we get:
    \begin{align*}
    c_\ell & \leq  \sum_{u=2}^\ell \bigg( \prod_{v=u+1}^\ell DB \bigg) B + \bigg( \prod_{v=2}^\ell BD \bigg) c_1 \\
    & \leq B \sum_{u=2}^\ell (DB)^{\ell-u} + (BD)^{\ell-1}(d+1)B \\
    & = B \sum_{u=0}^{\ell-2} (DB)^{u} + (d+1)D^{\ell-1}B^\ell \\
    & = B \frac{(BD)^{\ell-1}-1}{BD-1} + (d+1)D^{\ell-1}B^\ell \\
    & \leq B^\ell D^{\ell-1}\bigg(d+1+\frac{1}{BD-1}\bigg) .
\end{align*}
\end{itemize}
Let us now come back to finding an upper bound on losses of the partial networks $f_\theta^\ell$s. As previously, we have:
\begin{itemize}
    \item Initialization:
\begin{align*}
    r_1(\theta) & = \sup_{x\in[-1,1]^d} \sup_{1\leq i\leq D} |f_{\theta^*}^1(x)_i-f_{\theta}^1(x)_i| \\
    & \leq \sup_{x\in[-1,1]^d} \sup_{1\leq i\leq D} \bigg\{\sum_{j=1}^d |A_{1ij}-A_{1ij}^*|\cdot|x_j|+|b_{1i}-b_{1i}^*|\bigg\} \\
    & \leq d\cdot \tilde{A}_1 + \tilde{b}_1.
\end{align*}
    \item For any layer $\ell$:
    \begin{align*}
    r_\ell(\theta) & \leq \sup_{x\in[-1,1]^d} \sup_{1\leq i\leq D} \bigg\{\sum_{j=1}^D |A_{\ell ij}f_{\theta}^{\ell-1}(x)_j-A_{\ell ij}^*f_{\theta^*}^{\ell-1}(x)_j|+|b_{\ell i}-b_{\ell i}^*|\bigg\} \\
    & \leq \sup_{x\in[-1,1]^d} \sup_{1\leq i\leq D} \bigg\{\sum_{j=1}^D \bigg[ |A_{\ell ij}-A_{\ell ij}^*|\cdot |f_{\theta^*}^{\ell-1}(x)_j|+|A_{\ell ij}|\cdot |f_{\theta^*}^{\ell-1}(x)_j-f_{\theta}^{\ell-1}(x)_j|\bigg] \\
    & \quad \quad \quad \quad \quad \quad \quad \quad \quad +|b_{\ell i}-b_{\ell i}^*|\bigg\} \\
    & \leq D c_{\ell-1}\tilde{A}_\ell + BD r_{\ell-1}(\theta) + \tilde{b}_\ell \\
    & \leq BD r_{\ell-1}(\theta) + \tilde{A}_\ell B^{\ell-1} D^{\ell-1} \bigg(d+1+\frac{1}{BD-1}\bigg) + \tilde{b}_\ell.
\end{align*}
    \item Finally, using Formula \eqref{recurrence}:
    \begin{align*}
    r_\ell(\theta) & \leq  \sum_{u=2}^\ell \bigg( \prod_{v=u+1}^\ell BD \bigg) \bigg( \tilde{A}_u (BD)^{u-1}\bigg\{d+1+\frac{1}{BD-1}\bigg\}+\tilde{b}_u \bigg) + \bigg( \prod_{v=2}^\ell BD \bigg) r_1(\theta) \\
    & = \sum_{u=2}^\ell (BD)^{\ell-u} \tilde{A}_u(BD)^{u-1}\bigg(d+1+\frac{1}{BD-1}\bigg) + \sum_{u=2}^\ell (BD)^{\ell-u} \tilde{b}_u + (BD)^{\ell-1}r_1(\theta) \\
    & \leq \bigg(d+1+\frac{1}{BD-1}\bigg) \sum_{u=2}^\ell (BD)^{\ell-1} \tilde{A}_u + \sum_{u=2}^\ell (BD)^{\ell-u} \tilde{b}_u + (BD)^{\ell-1} d\tilde{A}_1 \\
    & \quad \quad \quad \quad \quad \quad \quad \quad \quad \quad \quad \quad \quad \quad \quad \quad \quad \quad \quad \quad \quad \quad \quad \quad \quad \quad \quad + (BD)^{\ell-1}\tilde{b}_1 \\
    & \leq (BD)^{\ell-1} \bigg(d+1+\frac{1}{BD-1}\bigg) \sum_{u=1}^\ell \tilde{A}_u + \sum_{u=1}^\ell (BD)^{\ell-u} \tilde{b}_u .
\end{align*}
\end{itemize}

Then, for any distribution $q$:
\begin{align*}
    \int \|f_{\theta}&-f_{\theta^*}\|_2^2 q({\rm d} \theta) \leq \int \|f_{\theta}-f_{\theta^*}\|_{\infty}^2 q({\rm d} \theta) = \int r_L(\theta)^2 q({\rm d} \theta) \\ 
    & \leq \int 2 (BD)^{2L-2}\bigg(d+1+\frac{1}{BD-1}\bigg)^2 \bigg( \sum_{\ell=1}^L \tilde{A}_\ell \bigg)^2 q({\rm d} \theta) + \int 2 \bigg( \sum_{\ell=1}^L (BD)^{L-\ell} \tilde{b}_u \bigg)^2 q({\rm d} \theta) \\
    & = 2 (BD)^{2L-2}\bigg(d+1+\frac{1}{BD-1}\bigg)^2 \bigg( \int \sum_{\ell=1}^L \tilde{A}_\ell^2 q({\rm d} \theta) + 2 \int \sum_{\ell=1}^L \sum_{k=1}^{\ell-1} \tilde{A}_\ell \tilde{A}_k q({\rm d} \theta) \bigg) \\
    & \quad + 2 \bigg( \int \sum_{\ell=1}^L (BD)^{2(L-\ell)} \tilde{b}_l^2 q({\rm d} \theta) + 2 \int \sum_{\ell=1}^L \sum_{k=1}^{\ell-1} (BD)^{L-\ell} (BD)^{L-k} \tilde{b}_\ell \tilde{b}_k q({\rm d} \theta) \bigg).
\end{align*}

Here, we define $q_n^*(\theta)$ as follows:
$$
\begin{cases}
\gamma_t^* = \mathbb{I}(\theta^*_t\ne 0), \\
\theta_t \sim \gamma_t^* \hspace{0.1cm} \mathcal{U}([\theta_t^*-s_n,\theta^*_t+s_n]) + (1-\gamma_t^*)\delta_{\{0\}} \hspace{0.3cm} \textnormal{for each} \hspace{0.2cm} t=1,...,T.
\end{cases}
$$
with $s_n^2=\frac{S}{4n} (BD)^{-2L} \bigg\{ \bigg(d+1+\frac{1}{BD-1}\bigg)^2 \frac{L^2}{(BD)^2} + \frac{1}{(BD)^2-1} + \frac{2}{(BD-1)^2} \bigg\}^{-1}$. Hence:
$$
\int \tilde{A}_\ell^2 q_n^*({\rm d} \theta) = \int \sup_{i,j} (A_{\ell,i,j}-A_{\ell,i,j}^*)^2 q_n^*({\rm d} A_{\ell,i,j}) \leq s_n^2,
$$
and
\begin{align*}
    \int \tilde{A}_{\ell} \tilde{A}_{k} q_n^*({\rm d} \theta) & = \bigg( \int \sup_{i,j} |A_{\ell,i,j}-A_{\ell,i,j}^*| q_n^*({\rm d} \theta) \bigg) \bigg( \int \sup_{i,j} |A_{k,i,j}-A_{k,i,j}^*| q_n^*({\rm d} \theta) \bigg) \\ 
    & \leq |s_n| \cdot |s_n| = s_n^2 ,
\end{align*}
and similarly, $\int \tilde{b}_\ell^2 q_n^*({\rm d} \theta) \leq s_n^2$ and $\int \tilde{b}_{\ell} \tilde{b}_{k} q_n^*({\rm d} \theta) \leq s_n^2$.

Then
\begin{align*}
    \int & \|f_{\theta}-f_{\theta^*}\|_2^2 q_n^*({\rm d} \theta) \\
    & \leq 2 (BD)^{2L-2}\bigg(d+1+\frac{1}{BD-1}\bigg)^2 \bigg( \int \sum_{\ell=1}^L \tilde{A}_\ell^2 q({\rm d} \theta) + 2 \int \sum_{\ell=1}^L \sum_{k=1}^{\ell-1} \tilde{A}_\ell \tilde{A}_k q({\rm d} \theta) \bigg) \\
    & \quad + 2 \bigg( \int \sum_{\ell=1}^L (BD)^{2(L-\ell)} \tilde{b}_l^2 q({\rm d} \theta) + 2 \int \sum_{\ell=1}^L \sum_{k=1}^{\ell-1} (BD)^{L-\ell} (BD)^{L-k} \tilde{b}_\ell \tilde{b}_k q({\rm d} \theta) \bigg) \\
    & \leq 2 (BD)^{2L-2}\bigg(d+1+\frac{1}{BD-1}\bigg)^2 s_n^2 \bigg( L + 2 \sum_{\ell=0}^{L-1} \ell \bigg) \\
    & \quad + 2 s_n^2 \sum_{\ell=0}^{L-1} (BD)^{2\ell} + 4 s_n^2 \sum_{\ell=1}^L \sum_{k=L-\ell+1}^{L-1} (BD)^{L-\ell} (BD)^{k} \\
    & = 2 (BD)^{2L-2}\bigg(d+1+\frac{1}{BD-1}\bigg)^2 s_n^2 L^2 \\
    & \quad + 2 s_n^2 \frac{(BD)^{2L}-1}{(BD)^2-1} + 4 s_n^2 \sum_{\ell=1}^L \sum_{k=0}^{\ell-2} (BD)^{L-\ell} (BD)^{k} (BD)^{L-\ell+1} \\
    & = 2 s_n^2 (BD)^{2L-2}\bigg(d+1+\frac{1}{BD-1}\bigg)^2 L^2 \\
    & \quad + 2 s_n^2 \frac{(BD)^{2L}-1}{(BD)^2-1} + 4 s_n^2 \sum_{\ell=1}^L (BD)^{L-\ell} \frac{(BD)^{\ell-1}-1}{BD-1} (BD)^{L-\ell+1} \\
    & \leq 2 s_n^2 (BD)^{2L-2}\bigg(d+1+\frac{1}{BD-1}\bigg)^2 L^2 \\
    & \quad + 2 s_n^2 \frac{(BD)^{2L}-1}{(BD)^2-1} + 4 s_n^2 \frac{1}{BD-1} \sum_{\ell=1}^L (BD)^{2L-\ell} \\
    & = 2 s_n^2 (BD)^{2L-2}\bigg(d+1+\frac{1}{BD-1}\bigg)^2 L^2 \\
    & \quad + 2 s_n^2 \frac{(BD)^{2L}-1}{(BD)^2-1} + 4 s_n^2 \frac{1}{BD-1}(BD)^L \frac{(BD)^L-1}{BD-1}  \\
    & \leq 2 s_n^2 (BD)^{2L-2}\bigg(d+1+\frac{1}{BD-1}\bigg)^2 L^2 + 2 s_n^2 \frac{(BD)^{2L}-1}{(BD)^2-1} + 4 s_n^2 \frac{1}{(BD-1)^2}(BD)^{2L}  \\
    & \leq 2 s_n^2 (BD)^{2L} \bigg\{ \bigg(d+1+\frac{1}{BD-1}\bigg)^2 \frac{L^2}{(BD)^2} + \frac{1}{(BD)^2-1} + \frac{2}{(BD-1)^2} \bigg\} \\
    & = \frac{S}{2n} \\
    & \leq r_n
\end{align*}
which proves Equation (\ref{KLcond1}).

\vspace{0.5cm}

\underline{\textit{Third step}}: \underline{\textit{we prove Inequality \eqref{KLcond2}}}

\vspace{0.5cm}

We will use the fact that for any $K$, any $p,p^0 \in [0,1]^K$ such that $\sum_{k=1}^K p_k = \sum_{k=1}^K p^0_k = 1$ and any distributions $Q_k,Q^0_k$ for $k=1,...,K$, we have:
\begin{equation}
\label{Do}
    \mathcal{K}\left( \sum_{k=1}^K p_k^0 Q_k^0 \bigg\| \sum_{k=1}^K p_k Q_k \right) \leq \mathcal{K}(p^0\|p) + \sum_{k=1}^K p_k^0 \mathcal{K}(Q^0_k\|Q_k).
\end{equation}
Please refer to Lemma 6.1 in \cite{cherief2018consistency} for a proof. Then we write $q_n^*$ and $\pi$ as mixtures of independent products of mixtures of two components:
$$
q_n^* = \sum_{\gamma \in \mathcal{S}^S_T} \mathbb{I}(\gamma=\gamma^*) \bigotimes_{t=1}^T \bigg\{ \gamma_t \hspace{0.1cm} \mathcal{U}([l_t,u_t]) + (1-\gamma_t)\delta_{\{0\}} \bigg\}
$$
and
$$
\pi = \sum_{\gamma \in \mathcal{S}^S_T} \binom{T}{S^*}^{-1} \bigotimes_{t=1}^T \bigg\{ \gamma_t \hspace{0.1cm} \mathcal{U}([-B,B]) + (1-\gamma_t)\delta_{\{0\}} \bigg\}
$$
Hence, using Inequality \ref{Do} twice and the additivity of KL for independent distributions:

\begin{align*}
    \textnormal{KL}(q_n^*\|\pi) & \leq \textnormal{KL}\bigg(\{\mathbb{I}(\gamma=\gamma^*)\}_{\gamma \in \mathcal{S}^S_T} \bigg\| \bigg\{\binom{T^*}{S^*}^{-1}\bigg\}_{\gamma \in \mathcal{S}^S_T} \bigg) + \sum_{\gamma \in \mathcal{S}^S_T} \mathbb{I}(\gamma=\gamma^*) \\ 
    & \hspace{0.5cm} \textnormal{KL}\bigg( \bigotimes_{t=1}^{T} \bigg\{ \gamma_t \hspace{0.1cm} \mathcal{U}([l_t,u_t]) + (1-\gamma_t)\delta_{\{0\}} \bigg\} \bigg\| \bigotimes_{t=1}^{T} \bigg\{ \gamma_t \hspace{0.1cm} \mathcal{U}([-B,B]) + (1-\gamma_t)\delta_{\{0\}} \bigg\}\bigg) \\ 
    & = \log\binom{T}{S} + \sum_{t=1}^{T} \textnormal{KL}\bigg( \gamma_t^* \hspace{0.1cm} \mathcal{U}([l_t,u_t]) + (1-\gamma_t^*)\delta_{\{0\}} \bigg\| \gamma_t^* \hspace{0.1cm} \mathcal{U}([-B,B]) + (1-\gamma_t^*)\delta_{\{0\}} \bigg) \\
    & \leq \log\binom{T}{S} + \sum_{t=1}^{T} \gamma_t^* \textnormal{KL}\bigg( \mathcal{U}([l_t,u_t]) \bigg\| \mathcal{U}([-B,B]) \bigg) + \sum_{t=1}^{T} (1-\gamma_t^*) \textnormal{KL}\big( \delta_{\{0\}} \|  \delta_{\{0\}} \big) \\
    & \leq S\log(T) + \sum_{t=1}^{T} \gamma_t^* \log\bigg(\frac{2B}{u_t-l_t}\bigg) \\
    & = S\log(T) + \sum_{t=1}^{T} \gamma_t^* \log\bigg(\frac{2B}{2s_n}\bigg) \\
    & = S\log(T) + S \log(B) + \frac{S}{2} \log\bigg(\frac{1}{s_n^2}\bigg)  \\
    & = S\log(T) + S \log(B) \\
    & \quad + \frac{S}{2} \log\bigg(\frac{4n}{S} (BD)^{2L} \bigg\{ \bigg(d+1+\frac{1}{BD-1}\bigg)^2 L^2 + \frac{1}{(BD)^2-1} + \frac{2}{(BD-1)^2} \bigg\}\bigg) ,
\end{align*}
and hence,
\begin{align*}
    \textnormal{KL}(q_n^*\|\pi) & \leq S\log(T) + S \log(B) \\
    & \quad + \frac{S}{2} \log\bigg(\frac{4n}{S} (BD)^{2L} \bigg\{ \bigg(d+1+\frac{1}{BD-1}\bigg)^2 L^2 + \frac{1}{(BD)^2-1} + \frac{2}{(BD-1)^2} \bigg\}\bigg) \\
    & \leq S\log(L(D+1)^2) + S \log(B) + LS\log(BD) \\
    & \quad + \frac{S}{2} \log\bigg(\frac{4n}{S}\bigg\{ \bigg(d+1+\frac{1}{BD-1}\bigg)^2 L^2 + \frac{1}{(BD)^2-1}  + \frac{2}{(BD-1)^2} \bigg\}\bigg) \\
    & \leq n r_n,
\end{align*}

which ends the proof.

\section{Proof of Corollary \ref{cor-concentration}}
\label{app:proof-cor-3}
Corollary \ref{cor-concentration} is a direct consequence of Theorem \ref{thm-general}, and we just need to find an upper bound on $\inf_{\theta^* \in \Theta_{S,L,D}} \|f_{\theta^*}-f_0\|_\infty^2$ and $r_n^{S,L,D}$. Indeed, according to Theorem \ref{thm-general}:
\begin{align}
\label{ProofIne}
\mathbb{E} \bigg[ \int \|f_\theta-f_0\|_2^2 \tilde{\pi}_{n,\alpha}(d\theta) \bigg] \leq \frac{2}{1-\alpha} \inf_{\theta^* \in \Theta_{S,L,D}} \|f_{\theta^*}-f_0\|_\infty^2 + \frac{2}{1-\alpha} \bigg(1+\frac{\sigma^2}{\alpha}\bigg) r_n.
\end{align}
We directly use the rate $r_n$ in the proof of Theorem \ref{thm-general} rather than $r_n^{S,L,D}$.

\vspace{0.2cm}
Let us assume that $f_0$ is $\beta$-H\"older smooth with $0<\beta<d$. Then according to Lemma 5.1 in \cite{Rockova2018}, we have for some positive constant $C_D$ independent of $n$ (see Theorem 6.1 in \cite{Rockova2018}) a neural network with architecture :
$$
L= 8 + (\lfloor\log_2n\rfloor + 5)(1 + \lceil\log_2 d\rceil),
$$
$$
D = C_D \lfloor n^{\frac{d}{2\beta+d}}/\log n \rfloor,
$$
$$
S \leq 94 d^2 (\beta+1)^{2d} D (L + \lceil\log_2 d\rceil),
$$
with an error $\|f-f_0\|_\infty$ that is at most a constant multiple of $\frac{D}{n}+D^{-\beta/d}\leq C_D n^{\frac{-2\beta}{2\beta+d}}/\log n + C_D^{-\beta/d} n^{\frac{-\beta}{2\beta+d}} \log^{\beta/d} n \leq (C_D/\log n + C_D^{-\beta/d} \log n) n^{\frac{-\beta}{2\beta+d}}$, which gives an upper bound on the first term of the right-hand-side of Inequality \ref{ProofIne} of order $n^{\frac{-2\beta}{2\beta+d}} \log^2 n$.
 
In the same time, we have for some constants $C,C'$ that do not depend on $n$:
\begin{align*}
    r_n & \leq \frac{SL}{n}\log(BD) + \frac{S}{n}\log(2BL(D+1)^2) + \frac{S}{2n} \log\bigg(\frac{4n}{S}\bigg\{ 3+ (d+2)^2 L^2\bigg\}\bigg) \\
    & \leq C \bigg( \frac{DL^2}{n} \log D + \frac{DL}{n} \log(LD) + \frac{DL}{n} \log n \bigg) \\
    & \leq C' \frac{n^{\frac{d}{2\beta+d}}}{n} \log^2 n = C' n^{\frac{-2\beta}{2\beta+d}} \log^2 n .
\end{align*}

Then the tempered posterior distribution $\pi_{n,\alpha}$ concentrates at the
minimax rate $r_n = n^{\frac{-2\beta}{2\beta+d}}$ up to a (squared) logarithmic factor for the expected $L_2$-distance in the sense that:
\begin{equation*}
\label{thm-concentration-holder}
{\pi}_{n,\alpha}\bigg(\theta\in\Theta_{S,L,D} \hspace{0.1cm} \big/ \hspace{0.1cm} \|f_\theta-f_0\|_2^2>M_n n^{\frac{-2\beta}{2\beta+d}} \log^2 n \bigg) \xrightarrow[n\rightarrow +\infty]{} 0.
\end{equation*}
in probability as $n\rightarrow+\infty$ for any $M_n\rightarrow +\infty$.

\section{Proof of Theorem \ref{thm-concentration}}
\label{app:proof-thm-1}
We could prove Theorem \ref{thm-concentration} using the prior mass condition \eqref{prior-mass-condition} but we will use instead the same proof than for Theorem \ref{thm-general}. Indeed, we can easily show that for any $\theta^* \in \Theta_{S,L,{D}}$,
\begin{equation*}
\mathbb{E} \bigg[ \int \|f_\theta-f_0\|_2^2 \pi_{n,\alpha}(d\theta) \bigg] \leq \frac{2}{1-\alpha}\|f_{\theta^*}-f_0\|_2^2+ \inf_{q} \bigg\{ \frac{2}{1-\alpha} \int \|f_{\theta}-f_{\theta^*}\|_2^2 q({\rm d} \theta)
+ \frac{2\sigma^2}{\alpha} \frac{\textnormal{KL}(q\|\pi)}{n(1-\alpha)}  \bigg\}
\end{equation*}
where the infimum is taken over all the probability distributions on $\Theta_{S,L,D}$. We have:
\begin{align*}
    \inf_{q} \bigg\{ \frac{2}{1-\alpha} \int \|f_{\theta}-f_{\theta^*}\|_2^2 & q({\rm d} \theta) + \frac{2\sigma^2}{\alpha} \frac{\textnormal{KL}(q\|\pi)}{n(1-\alpha)}  \bigg\} \\
    &\leq \inf_{q \in \mathcal{F}_{S,L,D}} \bigg\{ \frac{2}{1-\alpha} \int \|f_{\theta}-f_{\theta^*}\|_2^2 q({\rm d} \theta) + \frac{2\sigma^2}{\alpha} \frac{\textnormal{KL}(q\|\pi)}{n(1-\alpha)} \bigg\} \\
    &\leq \frac{2}{1-\alpha} \bigg(1+\frac{\sigma^2}{\alpha}\bigg) r_n^{S,L,D},
\end{align*}
which implies
\begin{align*}
    \mathbb{E} \bigg[ \int \|f_\theta-f_0\|_2^2 \tilde{\pi}_{n,\alpha}(d\theta) \bigg] &\leq \frac{2}{1-\alpha} \inf_{\theta^* \in \Theta_{S,L,D}} \|f_{\theta^*}-f_0\|_2^2 + \frac{2}{1-\alpha} \bigg(1+\frac{\sigma^2}{\alpha}\bigg) r_n^{S,L,D} \\
    &\leq \frac{2}{1-\alpha} \inf_{\theta^* \in \Theta_{S,L,D}} \|f_{\theta^*}-f_0\|_\infty^2 + \frac{2}{1-\alpha} \bigg(1+\frac{\sigma^2}{\alpha}\bigg) r_n^{S,L,D} .
\end{align*}

The rest of the proof follows the same lines than the one of Corollary \ref{cor-concentration}.

\section{Proof of Theorem \ref{thm-opti}}
\label{app:proof-thm-4}
First, we need Donsker and Varadhan's variational formula. Refer to Lemma 1.1.3. in \cite{CatoniThermo} for a proof.

\begin{thm}
\label{thm-dv}
For any probability $\lambda$ on some measurable space $(\textbf{E},\mathcal{E})$ and any measurable function
  $h: \textbf{E} \rightarrow \mathbb{R}$ such that $\int{\rm e}^h  \rm{d}\lambda < \infty$,
  \begin{equation*}
    \log\int {\rm e}^h \mathrm{d}\lambda = \underset{q}{\sup} \bigg\{ \int h \mathrm{d}q - \textrm{KL}(q,\lambda) \bigg\},
  \end{equation*}
  where the supremum is taken over all probability distributions over $\textbf{E}$ and with the convention
  $\infty-\infty =-\infty$. Moreover, if $h$ is upper-bounded on the
  support of $\lambda$, then the supremum is reached by the distribution of the form:
  \begin{equation*}
    \lambda_h(d\beta) =
    \frac{{\rm e}^{h(\beta)} }{\int{\rm e}^h \mathrm{d}\lambda} \lambda(\mathrm{d}\beta).
  \end{equation*}
\end{thm}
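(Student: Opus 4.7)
The plan is to reduce the claim to the classical \emph{Gibbs-shift} identity, namely that the functional $q \mapsto \int h\, dq - \textrm{KL}(q,\lambda)$ differs from $-\textrm{KL}(q,\lambda_h)$ by the additive constant $\log \int e^h d\lambda$; the supremum statement then follows immediately from Gibbs' inequality (non-negativity of the KL divergence).

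First I would dispense with the measures $q$ that are not absolutely continuous with respect to $\lambda$. For such $q$, by definition $\textrm{KL}(q,\lambda)=+\infty$, and the convention $\infty-\infty=-\infty$ forces the objective $\int h\,dq - \textrm{KL}(q,\lambda)$ to be $-\infty$, so these $q$ cannot contribute to the supremum. Hence it suffices to consider $q\ll\lambda$. Since $\int e^h d\lambda\in(0,\infty)$, the measure $\lambda_h$ is a well-defined probability measure equivalent to $\lambda$ on the support of $\lambda$, so $q\ll\lambda_h$ as well, and the chain rule for Radon–Nikodym derivatives gives
\begin{equation*}
\frac{dq}{d\lambda_h}(\beta) \;=\; \frac{dq}{d\lambda}(\beta)\cdot \frac{d\lambda}{d\lambda_h}(\beta) \;=\; \frac{dq}{d\lambda}(\beta)\cdot \frac{\int e^h d\lambda}{e^{h(\beta)}}.
\end{equation*}
Taking logarithms and integrating against $q$, one obtains the key identity
\begin{equation*}
\textrm{KL}(q,\lambda_h) \;=\; \textrm{KL}(q,\lambda) \;-\; \int h\, dq \;+\; \log\!\int e^h\, d\lambda,
\end{equation*}
which rearranges to
\begin{equation*}
\int h\, dq \;-\; \textrm{KL}(q,\lambda) \;=\; \log\!\int e^h\, d\lambda \;-\; \textrm{KL}(q,\lambda_h).
\end{equation*}

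From here the proof is immediate: since $\textrm{KL}(q,\lambda_h)\geq 0$ by Jensen applied to $u\mapsto u\log u$, the right-hand side is bounded above by $\log \int e^h d\lambda$, with equality precisely when $q=\lambda_h$. In the general case (no upper bound on $h$) the above rearrangement, read through the $-\infty$ convention, still yields the inequality; so the supremum equals $\log \int e^h d\lambda$. Under the additional assumption that $h$ is bounded above on the support of $\lambda$, one has $h\, e^h$ integrable against $\lambda$ (the upper bound gives integrability of the positive part, while $ue^u$ is bounded below universally), hence $\int h\, d\lambda_h$ is well-defined and finite and $\textrm{KL}(\lambda_h,\lambda_h)=0$, so the supremum is actually attained at $\lambda_h$.

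The main obstacle is the careful handling of integrability in the non-bounded case: when $h$ is merely such that $\int e^h d\lambda<\infty$, the integral $\int h\, dq$ may be $-\infty$ or ill-defined, and one has to check that the Gibbs-shift identity continues to make sense with the stated $\infty-\infty=-\infty$ convention. This is handled by splitting $h=h^+ - h^-$ and observing that $\int h^+ dq \leq \int \log(1+e^h)\, dq \leq \textrm{KL}(q,\lambda)+\log(1+\int e^h d\lambda)$ whenever the right-hand side is finite, which ensures that whenever $\textrm{KL}(q,\lambda)<\infty$ the positive part of $\int h\, dq$ is finite and the identity is unambiguous; otherwise both sides are $-\infty$ by convention, and the inequality holds trivially.
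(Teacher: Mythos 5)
The paper itself does not prove this statement: it simply cites Lemma 1.1.3 of Catoni (2007). Your self-contained argument via the Gibbs-shift identity $\int h\,dq-\mathrm{KL}(q,\lambda)=\log\int e^h d\lambda-\mathrm{KL}(q,\lambda_h)$ is exactly the classical route, and the pieces you do carry out are sound: restricting to $q\ll\lambda$, the chain-rule computation, the attainment claim when $h$ is bounded above (your integrability check for $\int h\,d\lambda_h$ via $ue^u\ge -1/e$ is correct), and the control of $\int h^+dq$ when $\mathrm{KL}(q,\lambda)<\infty$ — though the inequality $\int\log(1+e^h)\,dq\le \mathrm{KL}(q,\lambda)+\log(1+\int e^h d\lambda)$ that you invoke there is itself an instance of the very bound being proven, so you should justify it directly (one line of Jensen: $\int f\,dq-\mathrm{KL}(q,\lambda)=\int\log\bigl(e^f\,\tfrac{d\lambda}{dq}\bigr)dq\le\log\int e^f d\lambda$) to avoid the appearance of circularity.

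The genuine gap is the equality in the first display when $h$ is \emph{not} bounded above. Your argument only yields the inequality $\sup_q\{\int h\,dq-\mathrm{KL}(q,\lambda)\}\le\log\int e^h d\lambda$ in that case, and the sentence ``the rearrangement still yields the inequality; so the supremum equals $\log\int e^h d\lambda$'' is a non sequitur: the Gibbs-shift identity can actually fail, under the convention $\infty-\infty=-\infty$, at the one measure you would like to plug in. Indeed, $\int e^h d\lambda<\infty$ does not force $\int h^+e^h d\lambda<\infty$, and when $\int h\,d\lambda_h=+\infty$ one also has $\mathrm{KL}(\lambda_h,\lambda)=+\infty$, so the objective evaluated at $q=\lambda_h$ is $-\infty$, not $\log\int e^h d\lambda$; hence $\lambda_h$ cannot certify the lower bound. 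You need an approximation step, e.g.\ take $q_M(d\beta)\propto \mathds{1}_{\{h\le M\}}e^{h(\beta)}\lambda(d\beta)$, for which $\int h\,dq_M-\mathrm{KL}(q_M,\lambda)=\log\int_{\{h\le M\}}e^h d\lambda$, and let $M\to\infty$ using monotone convergence to recover $\log\int e^h d\lambda$ as the supremum. With that truncation argument added (and the Jensen justification above), your proof is complete and matches the standard proof the paper delegates to Catoni.
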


Let us come back to the proof of Theorem \ref{thm-opti}. Here, we can not directly use Theorem 2.6 in \cite{Tempered}. Thus we begin from scratch.
For any $\alpha \in (0,1)$ and $\theta \in\Theta_{S,L,D}$, using the definition of R\'enyi divergence and $D_\alpha(P^{\otimes n},R^{\otimes n})=nD_\alpha(P,R)$ as data are i.i.d.\:
$$
\mathbb{E}\bigg[ \exp\bigg(-\alpha r_{n}(P_\theta,P^0) + (1-\alpha)n D_\alpha(P_\theta,P^0)\bigg) \bigg] = 1
$$
where $r_n( P_\theta,P^0 ) = \frac{1}{2\sigma^2} \sum_{i=1}^n \{ (Y_i-f_\theta(X_i))^2 - (Y_i-f_0(X_i))^2 \}$ is the negative log-likelihood ratio.
Then we integrate and use Fubini's theorem, 
$$
\mathbb{E}\bigg[ \int \exp\bigg(-\alpha r_{n}(P_\theta,P^0) + (1-\alpha)n D_\alpha(P_\theta,P^0) \bigg) \pi(d\theta) \bigg] = 1.
$$
According to Theorem \ref{thm-dv},
\begin{multline*}
    \mathbb{E}\bigg[ \exp\bigg( \sup_{q} \bigg\{ \int \bigg( -\alpha r_{n}(P_\theta,P^0) + (1-\alpha)n D_\alpha(P_\theta,P^0) \bigg) q(d\theta) - \textrm{KL}(q||\pi) \bigg\} \bigg) \bigg] = 1
\end{multline*}
where the supremum is taken over all probability distributions over $\Theta_{S,L,D}$.
Then, using Jensen's inequality, 
$$ 
\mathbb{E}\bigg[ \sup_{q} \bigg\{ \int \bigg( -\alpha r_{n}(P_\theta,P^0) + (1-\alpha)n D_\alpha(P_\theta,P^0) \bigg) q(d\theta) - \textrm{KL}(q||\pi) \bigg\} \bigg] \leq 0,
$$
and then, 
$$
\mathbb{E}\bigg[ \int \bigg( -\alpha r_{n}(P_\theta,P^0) + (1-\alpha)n D_\alpha(P_\theta,P^0) \bigg) \tilde{\pi}_{n,\alpha}^k(d\theta) - \textrm{KL}(\tilde{\pi}^k_{n,\alpha}||\pi) \bigg] \leq 0.
$$
We rearrange terms:
\begin{equation*}
\mathbb{E}\bigg[ \int D_\alpha(P_\theta,P^0) \tilde{\pi}_{n,\alpha}^k(d\theta) \bigg] \leq 
\mathbb{E}\bigg[ \frac{\alpha}{1-\alpha} \int \frac{r_{n}(P_\theta,P^0)}{n} \tilde{\pi}_{n,\alpha}^k(d\theta) + \frac{\textrm{KL}(\tilde{\pi}^k_{n,\alpha}||\pi)}{n(1-\alpha)} \bigg],
\end{equation*}
that we can write:
\begin{align*}
    \mathbb{E} \bigg[ \int D_{\alpha}( P_\theta,P^0 ) \tilde{\pi}_{n,\alpha}^k(d\theta) \bigg] & \leq \mathbb{E}\bigg[ \frac{\alpha}{1-\alpha} \int \frac{r_{n}(P_\theta,P^0)}{n} \tilde{\pi}_{n,\alpha}(d\theta) + \frac{\textrm{KL}(\tilde{\pi}_{n,\alpha}||\pi)}{n(1-\alpha)} \bigg] \\
    & \quad + \mathbb{E}\bigg[ \frac{\alpha}{1-\alpha} \int \frac{r_{n}(P_\theta,P^0)}{n} \tilde{\pi}_{n,\alpha}^k(d\theta) + \frac{\textrm{KL}(\tilde{\pi}^k_{n,\alpha}||\pi)}{n(1-\alpha)} \bigg] \\
    & \quad \quad - \mathbb{E}\bigg[ \frac{\alpha}{1-\alpha} \int \frac{r_{n}(P_\theta,P^0)}{n} \tilde{\pi}_{n,\alpha}(d\theta) + \frac{\textrm{KL}(\tilde{\pi}_{n,\alpha}||\pi)}{n(1-\alpha)} \bigg] .
\end{align*}
Let us precise that $
\mathbb{E}\bigg[\frac{r_n( P_\theta,P^0 )}{n}\bigg] = \textnormal{KL}(P^0||P_\theta) = \frac{\|f_0-f_\theta\|^2_2}{2\sigma^2} ,
$ and:
$$
\mathcal{L}_n(q) = - \frac{\alpha}{2\sigma^2} \sum_{i=1}^n \int (Y_i-f_\theta(X_i))^2 q(d\theta) - \textrm{KL}(q\|\pi) \hspace{0.5cm} \textnormal{up to a constant.}
$$
Then:
\begin{equation*}
\mathbb{E} \bigg[ \int D_{\alpha}( P_\theta,P^0 ) \tilde{\pi}_{n,\alpha}^k(d\theta) \bigg] \leq \mathbb{E}\bigg[ \frac{\alpha}{1-\alpha} \int \frac{r_{n}(P_\theta,P^0)}{n} \tilde{\pi}_{n,\alpha}(d\theta) + \frac{\textrm{KL}(\tilde{\pi}_{n,\alpha}||\pi)}{n(1-\alpha)} \bigg] + \frac{\mathbb{E}[\mathcal{L}^*_n-\mathcal{L}_n^k]}{n(1-\alpha)} .
\end{equation*}
We conclude by interverting the infimum and the expectation and the same inequalities than in Theorem \ref{thm-general}:
\begin{align*}
    \mathbb{E}\bigg[ \frac{\alpha}{1-\alpha} \int \frac{r_{n}(P_\theta,P^0)}{n} \tilde{\pi}_{n,\alpha}&(d\theta)  + \frac{\textnormal{KL}(\tilde{\pi}_{n,\alpha}\|\pi)}{n(1-\alpha)}  \bigg] \\
    & = \mathbb{E} \bigg[ \inf_{q \in \mathcal{F}_{S,L,{D}}} \bigg\{ \frac{\alpha}{1-\alpha} \int \frac{r_n( P_\theta,P^0 )}{n} q({\rm d} \theta) + \frac{\textnormal{KL}(q\|\pi)}{n(1-\alpha)}  \bigg\} \bigg] \\
    & \leq \inf_{q \in \mathcal{F}_{S,L,{D}}} \bigg\{ \mathbb{E} \bigg[ \frac{\alpha}{1-\alpha} \int \frac{r_n( P_\theta,P^0 )}{n} q({\rm d} \theta) + \frac{\textnormal{KL}(q\|\pi)}{n(1-\alpha)} \bigg] \bigg\} \\
    & \leq \frac{\alpha}{1-\alpha} \frac{2}{2\sigma^2} \inf_{\theta^* \in \Theta_{S,L,{D}}} \|f_{\theta^*}-f_0\|_2^2 + \frac{\alpha}{2\sigma^2} \frac{2}{1-\alpha} \bigg(1+\frac{\sigma^2}{\alpha}\bigg)r_n^{S,L,D}.
\end{align*}

\section{Proof of Theorem \ref{thm-model-selection}}
\label{app:proof-thm-5}
We start from the last inequality obtained in the proof of Theorem 3 in \cite{cherief2018consistency2} that provides an upper bound in $\alpha$-R\'enyi divergence for the ELBO model selection framework. We still denote $P^0$ the generating distribution and $P_\theta$ the distribution characterizing the model. Then, for any $\alpha \in (0,1)$:
\begin{multline*}
    \mathbb{E}\bigg[ \int D_\alpha(P_\theta,P^0) \tilde{\pi}_{n,\alpha}^{\hat{S},\hat{L},\hat{D}}(d\theta) \bigg] \\
    \leq \inf_{S,L,D} \bigg\{ \inf_{q \in \mathcal{F}_{S,L,D}} \bigg\{ \frac{\alpha}{1-\alpha} \int \textrm{KL}(P^0,P_{\theta_{S,L,D}}) q(d\theta_{S,L,D}) + \frac{\textrm{KL}(q,\Pi^{S,L,D})}{n(1-\alpha)} \bigg\} +  \frac{\log(\frac{1}{\pi_{S,L,D}})}{n(1-\alpha)} \bigg\}
\end{multline*}
where $\Pi^{S,L,D}$ denotes the prior over the parameter set $\Theta_{S,L,D}$ and $\pi_{S,L,D}$ the prior belief over model $(S,L,D)$.

As for the proof of Theorem \ref{thm-general}, for any $S,L,D$ and any $\theta^* \in \Theta_{S,L,{D}}$:
\begin{align*}
\mathbb{E} &\bigg[ \int \frac{\alpha}{2\sigma^2}\|f_\theta-f_0\|_2^2 \tilde{\pi}_{n,\alpha}^{\hat{S},\hat{L},\hat{D}}(d\theta) \bigg] \\
&\leq \frac{\alpha}{1-\alpha} \frac{2}{2\sigma^2}\|f_{\theta^*}-f_0\|_2^2+ \inf_{q \in \mathcal{F}_{S,L,{D}}} \bigg\{ \frac{\alpha}{1-\alpha} \int \frac{2}{2\sigma^2}\|f_{\theta}-f_{\theta^*}\|_2^2 q(d\theta)
&+ \frac{\textrm{KL}(q,\Pi^{S,L,D})}{n(1-\alpha)} \bigg\} \\
& & + \frac{\log(\frac{1}{\pi_{S,L,D}})}{n(1-\alpha)} ,
\end{align*}
and then for any $S,L,D$ and any $\theta^* \in \Theta_{S,L,{D}}$,
\begin{align*}
\mathbb{E} \bigg[ \int \|f_\theta-f_0\|_2^2 \tilde{\pi}_{n,\alpha}^{\hat{S},\hat{L},\hat{D}}(d\theta) \bigg]
\leq \frac{2}{1-\alpha} \|f_{\theta^*}-f_0\|_2^2+ \frac{2}{1-\alpha} \bigg(1+&\frac{\sigma^2}{\alpha}\bigg) r_n^{S,L,D} \\
& + \frac{2\sigma^2}{\alpha(1-\alpha)} \frac{\log(\frac{1}{\pi_{S,L,D}})}{n} ,
\end{align*} 
which finally leads to Theorem \ref{thm-model-selection}.

\section{Result for sparse Gaussian approximations}
\label{app:Gauss}
In this appendix, we consider non-bounded parameter sets $\Theta_{S,L,D}$ and Gaussians instead of uniform distributions in spike-and-slab priors on $\theta \in \Theta_{S,L,D}$:
$$
\begin{cases}
\gamma \sim \mathcal{U}(\mathcal{S}^S_T), \\
\theta_t|\gamma_t \sim \gamma_t \hspace{0.1cm} \mathcal{N}(0,1) + (1-\gamma_t)\delta_{\{0\}}, \hspace{0.2cm} t=1,...,T
\end{cases}
$$
and Gaussian-based sparse spike-and-slab approximations:
$$
\begin{cases}
\gamma \sim \pi_{\gamma}, \\
\theta_t|\gamma_t \sim \gamma_t \hspace{0.1cm} \mathcal{N}(m_t,s_n^2) + (1-\gamma_t)\delta_{\{0\}} \hspace{0.3cm} \textnormal{for each} \hspace{0.2cm} t=1,...,T.
\end{cases}
$$
The following theorem states that using Gaussians instead of uniform distributions still leads to consistency with the same rate of convergence. Note that the infimum in the RHS of the inequality is taken over a bounded neural network model.

\begin{thm}
 \label{thm-Gauss}
Let us introduce the sets $\Theta_{S,L,D}^B$ that contain the neural network parameters upper bounded by $B$ (in $L_\infty$-norm). Then for any $\alpha\in(0,1)$, for any $B\geq 2$,
\begin{equation*}
\label{Gauss-thm-fixed}
 \mathbb{E} \bigg[ \int \|f_\theta-f_0\|_2^2 \tilde{\pi}_{n,\alpha}(d\theta) \bigg] \leq \frac{2}{1-\alpha} \inf_{\theta^* \in \Theta_{S,L,D}^{B}} \|f_{\theta^*}-f_0\|_2^2 + \frac{2}{1-\alpha} \bigg(1+\frac{\sigma^2}{\alpha}\bigg) r_n^{S,L,D}
\end{equation*}
with 
$$
r_n^{S,L,D}= \frac{SL}{n}\log(2BD) + \frac{S}{4n}\bigg(12\log(LD) + B^2\bigg) + \frac{S}{n} \log\bigg(11d\max(\frac{n}{S},1)\bigg) .
$$
\end{thm}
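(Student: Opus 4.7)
I would follow the same three-step architecture as in the proof of Theorem \ref{thm-general}: (i) invoke the PAC-Bayes bound of Alquier--Ridgway to control the generalization error by an infimum over the variational family of an approximation term plus a $\textrm{KL}$-to-prior term, then (ii) exhibit a concrete candidate $q_n^\star\in\mathcal{F}_{S,L,D}$ supported in a neighborhood of a fixed $\theta^\star\in\Theta_{S,L,D}^B$ which makes the approximation term small, and (iii) bound its $\textrm{KL}$ divergence to the spike-and-slab Gaussian prior. The only modification compared to Theorem \ref{thm-general} is to replace the uniform slab by a Gaussian slab of small variance $s_n^2$ centered at the coordinates of $\theta^\star$, i.e.\ set $\gamma^\star_t = \mathbb{I}(\theta^\star_t \ne 0)$ and the conditional on active coordinates to $\mathcal{N}(\theta^\star_t, s_n^2)$. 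Since $\theta^\star$ is restricted to $\Theta_{S,L,D}^B$, all its coordinates satisfy $|\theta^\star_t|\leq B$, which is what produces the additional $B^2$ term in the final rate.

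The first main technical step is the Lipschitz-type recursion bounding the discrepancy $r_L(\theta)=\|f_\theta - f_{\theta^\star}\|_\infty$ by a weighted sum of the coordinate-wise sup-differences $\widetilde A_u = \sup_{i,j}|A_{u,ij}-A_{u,ij}^\star|$ and $\widetilde b_u = \sup_j|b_{u,j}-b_{u,j}^\star|$, exactly as in the proof of Theorem \ref{thm-general} (this part only uses Assumption \ref{asm1} and the boundedness of $\theta^\star$, not the prior shape). Then I integrate $r_L(\theta)^2$ against $q_n^\star$. The difference with the uniform case is that the second moments are now
\[
\int \widetilde A_u^2\, q_n^\star(d\theta) \;\leq\; \mathbb{E}\Big[\max_{i,j}|Z_{ij}|^2\Big]\;\lesssim\; s_n^2 \log(LD),
\]
where the $Z_{ij}$ are centered Gaussians of variance $s_n^2$ on active coordinates; this standard Gaussian-maxima estimate (yielding the $\log(LD)$ factor in the rate) is the place where the unbounded tails cost a logarithm over the uniform argument. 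The Cauchy--Schwarz cross-term expansion used in the uniform proof then gives
\[
\int \|f_\theta - f_{\theta^\star}\|_2^2\, q_n^\star(d\theta) \;\lesssim\; s_n^2 (BD)^{2L}\log(LD)\cdot(\text{polynomial in }L,D),
\]
and one picks $s_n^2$ so that this is bounded by a target rate of order $S/n$.

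The second main step, the $\textrm{KL}$ bound, uses the same mixture-decomposition trick as in Appendix \ref{app:proof-thm-2}: writing both $q_n^\star$ and $\pi$ as mixtures indexed by the binary selector $\gamma$ with a point mass on $\gamma^\star$, the joint convexity inequality reduces the problem to $\log\binom{T}{S}$ plus a sum over the $S$ active coordinates of $\textrm{KL}(\mathcal N(\theta^\star_t, s_n^2)\,\|\,\mathcal N(0,1))$. The closed-form Gaussian KL gives $\tfrac12((\theta^\star_t)^2 + s_n^2 - 1 - \log s_n^2)$ per active coordinate; using $|\theta^\star_t|\leq B$ and plugging in the chosen $s_n^2$ produces exactly the three terms of $r_n^{S,L,D}$: the $SL\log(2BD)$ contribution from $\log(1/s_n^2)$, the $S(12\log(LD)+B^2)/(4n)$ term from the Gaussian-max factor and from $\sum(\theta^\star_t)^2\leq SB^2$, and the $S\log(11d\max(n/S,1))/n$ residue arising from $\log\binom{T}{S}$ together with the remaining numerical constants.

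The main obstacle is the use of Gaussian tails in place of the uniform bound on the weights. In Theorem \ref{thm-general} the almost-sure inequalities $|A_{u,ij}|\leq B$ were central to the recursion; here one has to move to second-moment control and absorb a $\log(LD)$ factor from Gaussian maxima. Once this is done the algebra is essentially parallel to the proof of Theorem \ref{thm-general}, and the theorem follows.
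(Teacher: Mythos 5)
Your high-level architecture (PAC-Bayes inequality, Gaussian spike-and-slab candidate $q_n^\star$ centered at $\theta^\star$, Gaussian-max estimate for the approximation integral, closed-form Gaussian KL with the mixture decomposition to get $\log\binom{T}{S}$ plus a sum of one-dimensional Gaussian KLs) matches the paper's proof, and you correctly locate the sources of the $B^2$ term (the $|\theta_t^\star|\leq B$ bound inside the Gaussian KL) and one $\log$ factor (Gaussian maxima). However, there is a genuine gap in your description of the Lipschitz recursion, which you claim can be imported ``exactly as in the proof of Theorem \ref{thm-general}'' because ``this part only uses Assumption \ref{asm1} and the boundedness of $\theta^\star$.'' That is not true. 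The recursion in the uniform case uses the almost-sure bound $|A_{\ell,i,j}|\leq B$ on the \emph{sampled} weights, which comes from the bounded parameter space $\Theta_{S,L,D}$, not from the boundedness of $\theta^\star$. With Gaussian slabs the sampled weights are unbounded, so this step breaks; that is precisely why the paper replaces $|A_{\ell,i,j}|\leq B$ by $|A_{\ell,i,j}|\leq B+\widetilde A_\ell$, which changes the recursion's multiplicative constants from the deterministic $BD$ to the \emph{random} factors $(B+\widetilde A_\ell)D$. The subsequent integration of $r_L(\theta)^2$ then has to exploit the mean-field independence across layers to split products $\prod_v(B+\widetilde A_v)$ into products of one-layer moments, each of which is shown to be at most $4B^2$ (for the square) and $2B$ (for the first moment) under the constraint $\sqrt{4 s_n^2 \log(3D)}\leq B$. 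Your last paragraph acknowledges that the almost-sure bound ``was central to the recursion'' and must be replaced, but the fix you propose---``move to second-moment control and absorb a $\log(LD)$ factor from Gaussian maxima''---conflates the moment bound for $\widetilde A_\ell$ (which is indeed Gaussian-max control, yielding a $\log(3D)$, not $\log(LD)$, factor) with the modification of the recursion itself, which is a separate and nontrivial step that your proposal does not carry out. Without the modified recursion and the independence-based factorization of the random prefactors, the approximation-term bound in Step 2 does not go through.
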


\begin{proof}

The proof follows the same structure than for Theorem \ref{thm-general}. We fix $B\geq 2$.

\vspace{0.2cm}

\underline{\textit{First step}}: \underline{\textit{we obtain the general inequality}}

\vspace{0.2cm}

We can directly write for any $\theta^* \in \Theta_{S,L,{D}}$,
\begin{align*}
\mathbb{E} \bigg[ &\int \|f_\theta-f_0\|_2^2 \tilde{\pi}_{n,\alpha}(d\theta) \bigg] \\
&\leq \frac{2}{1-\alpha}\|f_{\theta^*}-f_0\|_2^2+ \inf_{q \in \mathcal{F}_{S,L,{D}}} \bigg\{ \frac{2}{1-\alpha} \int \|f_{\theta}-f_{\theta^*}\|_2^2 q({\rm d} \theta)
+ \frac{2\sigma^2}{\alpha} \frac{\textnormal{KL}(q\|\pi)}{n(1-\alpha)}  \bigg\}.
\end{align*}
We define $\theta^*=\argmin_{\theta \in \Theta_{S,L,{D}}^{B}} \|f_\theta-f_0\|_2$. Again, the rest of the proof consists in finding a distribution $q_n^* \in \mathcal{F}_{S,L,{D}}$ that satisfies the extended prior mass condition:
\begin{equation}
\label{KLcond1Gauss}
    \int \|f_{\theta}-f_{\theta^*}\|_2^2 q_n^*({\rm d} \theta) \leq r_n 
\end{equation}
and
\begin{equation}
\label{KLcond2Gauss}
    \textnormal{KL}(q_n^*\|\pi) \leq n r_n 
\end{equation}
with $r_n = \frac{SL}{n}\log(2BD) + \frac{S}{n}\log(L(D+1)^2) + \frac{S \log\log(3D)}{n} + \frac{SB^2}{4n} + \frac{S}{2n} \log\bigg(\frac{16n}{S}\bigg\{ 3 + (d+2)^2 \bigg\}\bigg) \leq r_n^{S,L,D}$ as $3+(x+2)^2 \leq 7x^2$ for $x\geq1$.

\vspace{0.5cm}

\underline{\textit{Second step}}: \underline{\textit{we prove Inequality \eqref{KLcond1Gauss}}}

\vspace{0.5cm}

All coefficients of parameter $\theta^*$ are upper bounded by $B$. Hence, we still have:
\begin{equation*}
    c_\ell \leq B^\ell D^{\ell-1}\bigg(d+1+\frac{1}{BD-1}\bigg).
\end{equation*}
However, the upper bound on $r_\ell(\theta)$ is not the same, as $|A_{\ell,i,j}|$ can not be upper bounded by $B$ directly and must be upper bounded by $|A_{\ell,i,j}^*|+\tilde{A_\ell} \leq B +\tilde{A_\ell}$:
\begin{align*}
    r_\ell(\theta) & \leq \sup_{x\in[-1,1]^d} \sup_{1\leq i\leq D} \bigg\{\sum_{j=1}^D \bigg[ |A_{\ell ij}-A_{\ell ij}^*|\cdot |f_{\theta^*}^{\ell-1}(x)_j|+|A_{\ell ij}|\cdot |f_{\theta^*}^{\ell-1}(x)_j-f_{\theta}^{\ell-1}(x)_j|\bigg] \\
    & \quad \quad \quad \quad \quad \quad \quad \quad \quad +|b_{\ell i}-b_{\ell i}^*|\bigg\} \\
    & \leq \sup_{x\in[-1,1]^d} \sup_{1\leq i\leq D} \bigg\{\sum_{j=1}^D \bigg[ |A_{\ell ij}-A_{\ell ij}^*|\cdot |f_{\theta^*}^{\ell-1}(x)_j|+(B+\tilde{A_\ell})\cdot |f_{\theta^*}^{\ell-1}(x)_j-f_{\theta}^{\ell-1}(x)_j| \bigg] \\
    & \quad \quad \quad \quad \quad \quad \quad \quad \quad + |b_{\ell i}-b_{\ell i}^*|\bigg\} \\
    & \leq D c_{\ell-1}\tilde{A}_\ell + (B+\tilde{A_\ell})D r_{\ell-1}(\theta) + \tilde{b}_\ell \\
    & \leq (B+\tilde{A_\ell})D r_{\ell-1}(\theta) + \tilde{A}_\ell B^{\ell-1} D^{\ell-1} \bigg(d+1+\frac{1}{BD-1}\bigg) + \tilde{b}_\ell.
\end{align*}
Then, using Formula \ref{recurrence}:
\begin{align*}
    r_\ell(\theta) & \leq  \sum_{u=2}^\ell \bigg( \prod_{v=u+1}^\ell (B+\tilde{A_v})D \bigg) \bigg( \tilde{A}_u (BD)^{u-1}\bigg\{d+1+\frac{1}{BD-1}\bigg\}+\tilde{b}_u \bigg) \\
    & \quad \quad \quad \quad \quad \quad \quad \quad \quad \quad \quad \quad \quad \quad \quad \quad \quad \quad \quad \quad \quad \quad \quad \quad + \bigg( \prod_{v=2}^\ell (B+\tilde{A_v})D \bigg) r_1(\theta) \\
    & \leq \sum_{u=2}^\ell D^{\ell-u} \prod_{v=u+1}^\ell (B+\tilde{A_v}) \tilde{A}_u(BD)^{u-1}\bigg(d+1+\frac{1}{BD-1}\bigg)  \\
    & \quad \quad \quad \quad \quad \quad \quad \quad \quad \quad \quad + \sum_{u=2}^\ell D^{\ell-u} \prod_{v=u+1}^\ell (B+\tilde{A_v}) \tilde{b}_u + D^{\ell-1} \prod_{v=2}^\ell (B+\tilde{A_v}) r_1(\theta),
\end{align*}
and using inequality $r_1(\theta)\leq d \cdot \tilde{A}_1+\tilde{b}_1$:
    \begin{align*}
    r_\ell(\theta) & \leq D^{\ell-1} \bigg(d+1+\frac{1}{BD-1}\bigg) \sum_{u=2}^\ell B^{u-1} \prod_{v=u+1}^\ell (B+\tilde{A}_v)\tilde{A}_u + \sum_{u=2}^\ell D^{\ell-u} \prod_{v=u+1}^\ell (B+\tilde{A}_v)\tilde{b}_u \\
    & \quad \quad \quad \quad \quad \quad \quad \quad \quad \quad \quad \quad \quad \quad + dD^{\ell-1} \prod_{v=2}^\ell (B+\tilde{A}_v) \tilde{A}_1 + D^{\ell-1} \prod_{v=2}^\ell (B+\tilde{A}_v) \tilde{b}_1 \\
    & \leq D^{\ell-1} \bigg(d+1+\frac{1}{BD-1}\bigg) \sum_{u=1}^\ell B^{u-1} \prod_{v=u+1}^\ell (B+\tilde{A}_v)\tilde{A}_u + \sum_{u=1}^\ell D^{\ell-u} \prod_{v=u+1}^\ell (B+\tilde{A}_v)\tilde{b}_u .
\end{align*}
Then we have for any distribution $q(\theta) = q_1(\theta_1)\times...\times q_T(\theta_T)$:
\begin{align*}
    \int & \|f_{\theta}-f_{\theta^*}\|_2^2 q({\rm d} \theta) \leq \int \|f_{\theta}-f_{\theta^*}\|_{\infty}^2 q({\rm d} \theta) = \int r_L(\theta)^2 q({\rm d} \theta) \\
    & \leq \int 2 D^{2L-2} \bigg(d+1+\frac{1}{BD-1}\bigg)^2 \bigg( \sum_{\ell=1}^L B^{\ell-1} \prod_{v=\ell+1}^L (B+\tilde{A}_v)\tilde{A}_\ell \bigg)^2 q({\rm d} \theta) \\
    & \quad \quad \quad \quad \quad \quad \quad \quad \quad \quad \quad \quad + \int 2 \bigg( \sum_{\ell=1}^L D^{L-\ell} \prod_{v=\ell+1}^L (B+\tilde{A}_v)\tilde{b}_\ell \bigg)^2 q({\rm d} \theta) \\
    & = 2 D^{2L-2}\bigg(d+1+\frac{1}{BD-1}\bigg)^2 \bigg( \int \sum_{\ell=1}^L B^{2\ell-2} \prod_{v=\ell+1}^L (B+\tilde{A}_v)^2 \tilde{A}_\ell^2 q({\rm d} \theta) \\
    & \quad \quad \quad \quad \quad \quad \quad \quad \quad \quad + 2 \int \sum_{\ell=1}^L \sum_{k=1}^{\ell-1} B^{\ell-1} B^{k-1} \prod_{v=\ell+1}^L (B+\tilde{A}_v)\tilde{A}_\ell \prod_{v=k+1}^L (B+\tilde{A}_v)\tilde{A}_k q({\rm d} \theta) \bigg) \\
    & \quad + 2 \bigg( \int \sum_{\ell=1}^L D^{2(L-\ell)} \prod_{v=\ell+1}^L (B+\tilde{A}_v)^2 \tilde{b}_\ell^2 q({\rm d} \theta) \\
    & \quad \quad \quad \quad \quad \quad \quad \quad + 2 \int \sum_{\ell=1}^L \sum_{k=1}^{\ell-1} D^{L-\ell} D^{L-k} \prod_{v=\ell+1}^L (B+\tilde{A}_v)\tilde{b}_\ell \prod_{v=k+1}^L (B+\tilde{A}_v)\tilde{b}_k q({\rm d} \theta) \bigg) \\
    & = 2 D^{2L-2}\bigg(d+1+\frac{1}{BD-1}\bigg)^2 \bigg( \sum_{\ell=1}^L B^{2\ell-2} \prod_{v=\ell+1}^L \int (B+\tilde{A}_v)^2 q({\rm d} \theta) \int \tilde{A}_\ell^2 q_\ell({\rm d} \theta_\ell) \\
    & \quad + 2 \sum_{\ell=1}^L \sum_{k=1}^{\ell-1} B^{\ell-1} B^{k-1} \prod_{v=\ell+1}^L \int (B+\tilde{A}_v)^2 q({\rm d} \theta) \int \tilde{A}_\ell q_\ell({\rm d} \theta_\ell) \prod_{v=k+1}^{\ell} \int (B+\tilde{A}_v) q({\rm d} \theta) \int \tilde{A}_k q({\rm d} \theta) \bigg) \\
    & \quad + 2 \bigg( \sum_{\ell=1}^L D^{2(L-\ell)} \prod_{v=\ell+1}^L \int (B+\tilde{A}_v)^2 q({\rm d} \theta) \int \tilde{b}_\ell^2 q({\rm d} \theta) \\
    & \quad + 2 \sum_{\ell=1}^L \sum_{k=1}^{\ell-1} D^{L-\ell} D^{L-k} \prod_{v=\ell+1}^L \int (B+\tilde{A}_v)^2 q({\rm d} \theta) \int \tilde{b}_\ell q({\rm d} \theta) \prod_{v=k+1}^{\ell} \int (B+\tilde{A}_v) q({\rm d} \theta) \int \tilde{b}_k q({\rm d} \theta) \bigg).
\end{align*}

Here, we define $q_n^*(\theta)$ as follows:
$$
\begin{cases}
\gamma_t^* = \mathbb{I}(\theta^*_t\ne 0), \\
\theta_t \sim \gamma_t^* \hspace{0.1cm} \mathcal{N}(\theta_t^*,s_n^2) + (1-\gamma_t^*)\delta_{\{0\}} \hspace{0.3cm} \textnormal{for each} \hspace{0.2cm} t=1,...,T.
\end{cases}
$$
with $s_n^2 = \frac{S}{16n} \log(3D)^{-1} (2BD)^{-2L} \bigg\{ \bigg(d+1+\frac{1}{BD-1} \bigg)^2 + \frac{1}{(2BD)^2-1} + \frac{2}{(2BD-1)^2} \bigg\}^{-1}$. 

We upper bound the expectation of the supremum of absolute values of Gaussian variables:
$$
\int \tilde{A}_\ell q_n^*({\rm d} \theta) = \int \sup_{i,j} |A_{\ell,i,j}-A_{\ell,i,j}^*| q_n^*({\rm d} \theta) \leq \sqrt{2s_n^2\log(2D^2)} = \sqrt{4s_n^2\log (3D)} ,
$$
and use Example 2.7 in \cite{boucheron2003}:
$$
\int \tilde{A}_\ell^2 q_n^*({\rm d} \theta) = \int \sup_{i,j} (A_{\ell,i,j}-A_{\ell,i,j}^*)^2 q_n^*({\rm d} \theta) \leq s_n^2 (1+2\sqrt{\log(D^2)}+\log(D^2)) = 4 s_n^2 \log(3D) ,
$$
which also give:
$$
\int (B+\tilde{A}_\ell) q_n^*({\rm d} \theta) = B + \int \tilde{A}_\ell q_n^*({\rm d} \theta) \leq B+\sqrt{4s_n^2\log (3D)} \leq 2B ,
$$
and
\begin{align*}
    \int (B+\tilde{A}_\ell)^2 q_n^*({\rm d} \theta) & = B^2 + 2B \int \tilde{A}_\ell q_n^*({\rm d} \theta) + \int \tilde{A}_\ell^2 q_n^*({\rm d} \theta) \\
    & \leq B^2 + 2B \sqrt{4s_n^2\log (3D)} + 4 s_n^2 \log (3D) \\
    & \leq 4B^2 
\end{align*}
as $\sqrt{4s_n^2\log (3D)} \leq B$ ($s_n^2 \leq \frac{LD(D+1)}{16n} (2BD)^{-2L} \leq \frac{2LD^2}{16n} 4^{-2L} D^{-2L} \leq 1$).

\vspace{0.2cm}

Similarly,
$$
\int \tilde{b}_\ell q_n^*({\rm d} \theta) \leq \sqrt{4s_n^2\log (3D)} 
$$
and
$$
\int \tilde{b}_\ell^2 q_n^*({\rm d} \theta) \leq 4 s_n^2 \log (3D) .
$$

Then
\begin{align*}
    \int \|f_{\theta}-f_{\theta^*}\|_2^2 q_n^*({\rm d} \theta)
    & \leq 2 D^{2L-2}\bigg(d+1+\frac{1}{BD-1}\bigg)^2 \bigg( \sum_{\ell=1}^L B^{2\ell-2} (4B^2)^{L-\ell} 4s_n^2\log (3D) \\
    & \quad + 2 \sum_{\ell=1}^L \sum_{k=1}^{\ell-1} B^{\ell-1} B^{k-1} (4B^2)^{L-\ell} \sqrt{4s_n^2\log (3D)} (2B)^{\ell-k} \sqrt{4s_n^2\log (3D)} \bigg) \\
    & \quad + 2 \bigg( \sum_{\ell=1}^L D^{2(L-\ell)} (4B^2)^{L-\ell} 4s_n^2\log (3D) \\
    & \quad + 2 \sum_{\ell=1}^L \sum_{k=1}^{\ell-1} D^{L-\ell} D^{L-k} (4B^2)^{L-\ell} \sqrt{4s_n^2\log (3D)} (2B)^{\ell-k} \sqrt{4s_n^2\log (3D)} \bigg) ,
\end{align*}
i.e.
\begin{align*}
    \int & \|f_{\theta}-f_{\theta^*}\|_2^2 q_n^*({\rm d} \theta) \\
    & \leq 2 D^{2L-2}\bigg(d+1+\frac{1}{BD-1}\bigg)^2 \bigg( B^{2L-2} 4s_n^2\log (3D) \sum_{\ell=0}^{L-1} 4^\ell \\
    & \quad \quad \quad \quad \quad \quad \quad \quad \quad \quad \quad \quad + 2 B^{2L-2} 4s_n^2\log (3D) \sum_{\ell=1}^L \sum_{k=1}^{\ell-1} 2^{L-\ell} 2^{L-k} \bigg) \\
    & \quad + 2 \bigg( 4s_n^2\log (3D) \sum_{\ell=1}^L (2BD)^{2L-2\ell} + 8s_n^2\log (3D) \sum_{\ell=1}^L \sum_{k=1}^{\ell-1} (2BD)^{L-\ell} (2BD)^{L-k} \bigg) \\
    & \leq 2 D^{2L-2}\bigg(d+1+\frac{1}{BD-1}\bigg)^2 \bigg( B^{2L-2} 4s_n^2\log (3D) \frac{4^L-1}{4-1} \\
    & \quad \quad \quad \quad \quad \quad \quad \quad \quad \quad \quad \quad + 2 B^{2L-2} 4s_n^2\log (3D) \sum_{\ell=1}^L 2^{L-\ell} 2^{L-\ell+1} \sum_{k=0}^{\ell-2} 2^k \bigg) \\
    & \quad + 2 \bigg( 4s_n^2\log (3D) \sum_{\ell=0}^{L-1} (2BD)^{2\ell} + 8s_n^2\log (3D) \sum_{\ell=1}^L (2BD)^{L-\ell} (2BD)^{L-\ell+1} \sum_{k=0}^{\ell-2} (2BD)^k \bigg) \\
    & \leq 2 D^{2L-2}\bigg(d+1+\frac{1}{BD-1}\bigg)^2 \bigg( B^{2L-2} 4s_n^2\log (3D) \frac{4^L}{3} \\
    & \quad \quad \quad \quad \quad \quad \quad \quad \quad \quad \quad \quad + 2 B^{2L-2} 4s_n^2\log (3D) \sum_{\ell=1}^L 2^{L-\ell} 2^{L-\ell+1} 2^{\ell-1} \bigg) \\
    & \quad + 2 \bigg( 4s_n^2\log (3D) \frac{(2BD)^{2L}}{(2BD)^2-1} + 8s_n^2\log (3D) \sum_{\ell=1}^L (2BD)^{L-\ell} (2BD)^{L-\ell+1} \frac{(2BD)^{\ell-1}}{2BD-1} \bigg) \\
    & \leq 2 D^{2L-2}\bigg(d+1+\frac{1}{BD-1}\bigg)^2 \bigg( B^{2L-2} 4s_n^2\log (3D) \frac{4^L}{3} + 2 B^{2L-2} 4s_n^2\log (3D) 2^L \sum_{\ell=0}^{L-1} 2^\ell \bigg) \\
    & \quad + 2 \bigg( 4s_n^2\log (3D) \frac{(2BD)^{2L}}{(2BD)^2-1} + 8s_n^2\log (3D) \sum_{\ell=0}^{L-1} (2BD)^\ell \frac{(2BD)^L}{2BD-1} \bigg) \\
    & \leq 2 D^{2L-2}\bigg(d+1+\frac{1}{BD-1}\bigg)^2 \bigg( B^{2L-2} 4s_n^2\log (3D) \frac{4^L}{3} + 2 B^{2L-2} 4s_n^2\log (3D) 2^{2L} \bigg) \\
    & \quad + 2 \bigg( 4s_n^2\log (3D) \frac{(2BD)^{2L}}{(2BD)^2-1} + 8s_n^2\log (3D) \frac{(2BD)^{2L}}{(2BD-1)^2} \bigg) \\
    & = 2 D^{2L-2}\bigg(d+1+\frac{1}{BD-1} \bigg)^2 4s_n^2\log(3D) \bigg( B^{2L-2} \frac{4^L}{3} + 2 B^{2L-2} 2^{2L} \bigg) \\
    & \quad + 2 \bigg( \frac{(2BD)^{2L}}{(2BD)^2-1} + 2 \frac{(2BD)^{2L}}{(2BD-1)^2} \bigg) 4s_n^2\log(3D) ,
\end{align*}
and consequently, as $BD\geq2$,
\begin{align*}
    \int & \|f_{\theta}-f_{\theta^*}\|_2^2 q_n^*({\rm d} \theta) \\
    & \leq 8 s_n^2\log(3D) \bigg\{ D^{2L-2} \bigg(d+1+\frac{1}{BD-1} \bigg)^2 \frac{7}{3} B^{2L-2} 2^{2L} \\
    & \quad \quad \quad \quad \quad \quad \quad \quad \quad \quad \quad \quad \quad \quad \quad + (2BD)^{2L} \bigg( \frac{1}{(2BD)^2-1} + \frac{2}{(2BD-1)^2} \bigg) \bigg\} \\
    & = 8 s_n^2\log(3D) \bigg\{ (2BD)^{2L} \frac{1}{(BD)^2} \bigg(d+1+\frac{1}{BD-1} \bigg)^2 \frac{7}{3} \\
    & \quad \quad \quad \quad \quad \quad \quad \quad \quad \quad \quad \quad \quad \quad \quad + (2BD)^{2L} \bigg( \frac{1}{(2BD)^2-1} + \frac{2}{(2BD-1)^2} \bigg) \bigg\} \\
    & \leq 8 s_n^2\log(3D) (2BD)^{2L} \bigg\{ \bigg(d+1+\frac{1}{BD-1} \bigg)^2 + \frac{1}{(2BD)^2-1} + \frac{2}{(2BD-1)^2} \bigg\} \\
    & = \frac{S}{2n} \\
    & \leq r_n .
\end{align*}
which ends Step 2.

\vspace{0.3cm}

\underline{\textit{Third step}}: \underline{\textit{we prove Inequality \eqref{KLcond2Gauss}}}

\vspace{0.3cm}

We end the proof:
\begin{align*}
    \textnormal{KL}(q_n^*\|\pi) & \leq \log\binom{T}{S} + \sum_{t=1}^{T} \gamma_t^* \textnormal{KL}\bigg( \mathcal{N}(\theta_t^*,s_n^2) \bigg\| \mathcal{N}(0,1) \bigg) \\
    & \leq S\log(T) + \sum_{t=1}^{T} \gamma_t^* \bigg\{ \frac{1}{2} \log\bigg(\frac{1}{s_n^2}\bigg) +\frac{s_n^2+\theta_t^{*2}}{2}-\frac{1}{2} \bigg\} \\
    & \leq S\log(T) + \sum_{t=1}^{T} \gamma_t^* \bigg\{ \frac{1}{2} \log\bigg(\frac{1}{s_n^2}\bigg) +\frac{s_n^2+B^2}{2}-\frac{1}{2} \bigg\} \\
    & = S\log(T) + \frac{S}{2}s_n^2 + \frac{S}{2} \frac{B^2-1}{2} + \frac{S}{2} \log\bigg(\frac{1}{s_n^2}\bigg) \\
    & \leq S\log(T) + \frac{S}{2} + \frac{S}{2} \frac{B^2-1}{2} \\
    & \quad + \frac{S}{2} \log\bigg(\frac{16n}{S} \log(3D) (2BD)^{2L} \bigg\{ \bigg(d+1+\frac{1}{BD-1} \bigg)^2 + \frac{1}{(2BD)^2-1} + \frac{2}{(2BD-1)^2} \bigg\}\bigg) \\
    & \leq S\log(L(D+1)^2) + \frac{B^2 S}{4} + LS\log(2BD) + \frac{S}{2} \log\log (3D)  \\
    & \quad + \frac{S}{2} \log\bigg(\frac{16n}{S}\bigg\{ \bigg(d+1+\frac{1}{BD-1}\bigg)^2 + \frac{1}{(BD)^2-1} + \frac{2}{(BD-1)^2} \bigg\}\bigg) \\
    & \leq n r_n .
\end{align*}

\vspace{-0.2cm}

\end{proof}

\end{document}